\newtheorem{theorem}{Theorem}
\newtheorem{prop}{Proposition}
\newtheorem{lemma}{Lemma}
\newtheorem{rem}{Remark}
\newtheorem{exmp}{Example}
\begin{document}

\title{$Z$-knotted triangulations of surfaces}
\author{Mark Pankov, Adam Tyc}
\subjclass[2000]{}
\keywords{embeded graph, triangulation, zigzag}
\address{Mark Pankov: Faculty of Mathematics and Computer Science, 
University of Warmia and Mazury, S{\l}oneczna 54, 10-710 Olsztyn, Poland}
\email{pankov@matman.uwm.edu.pl}

\address{Adam Tyc: Institute of Mathematics, Polish Academy of Science, \'Sniadeckich 8, 00-656 Warszawa, Poland}
\email{atyc@impan.pl}

\maketitle

\begin{abstract}
A zigzag in a map (a $2$-cell embedding of a connected graph in a connected closed $2$-dimensional surface) 
is a cyclic sequence of edges satisfying the following conditions: 
1) any two consecutive edges lie on the same face and have a common vertex,
2) for any three consecutive edges the first and the third edges are disjoint
and the face containing the first and the second edges is distinct from the face which contains the second and the third.
A map is $z$-knotted if it contains a single zigzag.
Such maps are closely connected to Gauss code problem and have nice homological properties.
We show that every triangulation of a connected closed $2$-dimensional surface admits a $z$-knotted shredding.
\end{abstract}

\section{Introduction}
Consider two adjacent edges $e_{1}$ and $e_{2}$ in a regular polyhedron.
These edges have a common vertex $v$ and lie on the same face.
We take the second face containing $e_{2}$. 
In this face, there is a unique edge $e_{3}$ intersecting $e_{2}$ in a vertex different from $v$.
Similarly, we get  an edge $e_{4}$ from the edges $e_{2}$ and $e_{3}$.
Step by step, we create a sequence $e_{1},e_{2},e_{3},\dots$ whose elements will repeat  after some times.  
Since we are dealing with a regular polyhedron, this is a skew polygon without self-intersections.
Coxeter called such polygons {\it Petrie polygons} \cite[Section 2.6]{Coxeter}. They play an important role in his famous  book. 
See Figure 1 for the Petrie polygons on  the five Platonic solids (the bold line),
observe that the dual solids have the same Petrie polygons. 
\begin{center}
\begin{tikzpicture} [scale=0.6]
\draw[xshift=4.375cm, fill=black] (0:1.75cm) circle (3pt);
\draw[xshift=4.375cm, fill=black] (90:1.75cm) circle (3pt);
\draw[xshift=4.375cm, fill=black] (180:1.75cm) circle (3pt);
\draw[xshift=4.375cm, fill=black] (270:1.75cm) circle (3pt);
    \draw[xshift=4.375cm,thick,line width=2pt] (0:1.75cm) \foreach \x in {90,180,...,359} {
            -- (\x:1.75cm) 
        } -- cycle (90:1.75cm);
    \draw[xshift=4.375cm, dashed] (0:1.75cm) \foreach \x in {90,270} {-- (\x:1.75cm)};
    \draw[xshift=4.375cm] (0:1.75cm) \foreach \x in {0,180} {-- (\x:1.75cm)};

\draw[xshift=8.75cm, fill=black] (0:1.75cm) circle (3pt);
\draw[xshift=8.75cm, fill=black] (60:1.75cm) circle (3pt);
\draw[xshift=8.75cm, fill=black] (120:1.75cm) circle (3pt);
\draw[xshift=8.75cm, fill=black] (180:1.75cm) circle (3pt);
\draw[xshift=8.75cm, fill=black] (240:1.75cm) circle (3pt);
\draw[xshift=8.75cm, fill=black] (300:1.75cm) circle (3pt);
    \draw[xshift=8.75cm,thick,line width=2pt] (0:1.75cm) \foreach \x in {60, 120,...,359} {
            -- (\x:1.75cm) 
        } -- cycle (60:1.75cm);
\draw[xshift=8.75cm, fill=black] (0,0) circle (3pt);
    \draw[xshift=8.75cm] (0:1.75cm)--(0,0)--(120:1.75cm) (0,0)--(240:1.75cm);
    \draw[xshift=8.75cm, dashed] (60:1.75cm)--(0,0)--(180:1.75cm) (0,0)--(300:1.75cm);

\draw[xshift=13.125cm, fill=black] (0:1.75cm) circle (3pt);
\draw[xshift=13.125cm, fill=black] (60:1.75cm) circle (3pt);
\draw[xshift=13.125cm, fill=black] (120:1.75cm) circle (3pt);
\draw[xshift=13.125cm, fill=black] (180:1.75cm) circle (3pt);
\draw[xshift=13.125cm, fill=black] (240:1.75cm) circle (3pt);
\draw[xshift=13.125cm, fill=black] (300:1.75cm) circle (3pt);
    \draw[xshift=13.125cm,thick,line width=2pt] (0:1.75cm) \foreach \x in {60, 120,...,359} {
            -- (\x:1.75cm) 
        } -- cycle (60:1.75cm);
    \draw[xshift=13.125cm, dashed] (0:1.75cm)--(120:1.75cm)--(240:1.75cm)--cycle;
    \draw[xshift=13.125cm] (60:1.75cm)--(180:1.75cm)--(300:1.75cm)--cycle;

\draw[xshift=5.8275cm, yshift=-4.375cm, fill=black] (0:1.75cm) circle (3pt);
\draw[xshift=5.8275cm, yshift=-4.375cm, fill=black] (36:1.75cm) circle (3pt);
\draw[xshift=5.8275cm, yshift=-4.375cm, fill=black] (72:1.75cm) circle (3pt);
\draw[xshift=5.8275cm, yshift=-4.375cm, fill=black] (108:1.75cm) circle (3pt);
\draw[xshift=5.8275cm, yshift=-4.375cm, fill=black] (144:1.75cm) circle (3pt);
\draw[xshift=5.8275cm, yshift=-4.375cm, fill=black] (180:1.75cm) circle (3pt);
\draw[xshift=5.8275cm, yshift=-4.375cm, fill=black] (216:1.75cm) circle (3pt);
\draw[xshift=5.8275cm, yshift=-4.375cm, fill=black] (252:1.75cm) circle (3pt);
\draw[xshift=5.8275cm, yshift=-4.375cm, fill=black] (288:1.75cm) circle (3pt);
\draw[xshift=5.8275cm, yshift=-4.375cm, fill=black] (324:1.75cm) circle (3pt);
    \draw[xshift=5.8275cm, yshift=-4.375cm,thick,line width=2pt] (0:1.75cm) \foreach \x in {36, 72,...,359} {
            -- (\x:1.75cm) 
        } -- cycle (36:1.75cm);
\draw[xshift=5.8275cm, yshift=-4.375cm, fill=black] (0:0.875cm) circle (3pt);
\draw[xshift=5.8275cm, yshift=-4.375cm, fill=black] (72:0.875cm) circle (3pt);
\draw[xshift=5.8275cm, yshift=-4.375cm, fill=black] (144:0.875cm) circle (3pt);
\draw[xshift=5.8275cm, yshift=-4.375cm, fill=black] (216:0.875cm) circle (3pt);
\draw[xshift=5.8275cm, yshift=-4.375cm, fill=black] (288:0.875cm) circle (3pt);
    \draw[xshift=5.8275cm, yshift=-4.375cm] (0:0.875cm) \foreach \x in {72, 144, ...,359} {
            -- (\x:0.875cm) 
        } -- cycle (72:0.875cm);
    \draw[xshift=5.8275cm, yshift=-4.375cm] (0:1.75cm)--(0:0.875cm) (72:1.75cm)--(72:0.875cm) (144:1.75cm)--(144:0.875cm) (216:1.75cm)--(216:0.875cm) (288:1.75cm)--(288:0.875cm);
    \draw[xshift=5.8275cm, yshift=-4.375cm, dashed] (36:0.875cm) \foreach \x in {36, 108, ...,359} {
            -- (\x:0.875cm) 
        } -- cycle (36:0.875cm);
    \draw[xshift=5.8275cm, yshift=-4.375cm, dashed] (36:1.75cm)--(36:0.875cm) (108:1.75cm)--(108:0.875cm) (180:1.75cm)--(180:0.875cm) (252:1.75cm)--(252:0.875cm) (324:1.75cm)--(324:0.875cm);

\draw[xshift=11.655cm, yshift=-4.375cm, fill=black] (0:1.75cm) circle (3pt);
\draw[xshift=11.655cm, yshift=-4.375cm, fill=black] (36:1.75cm) circle (3pt);
\draw[xshift=11.655cm, yshift=-4.375cm, fill=black] (72:1.75cm) circle (3pt);
\draw[xshift=11.655cm, yshift=-4.375cm, fill=black] (108:1.75cm) circle (3pt);
\draw[xshift=11.655cm, yshift=-4.375cm, fill=black] (144:1.75cm) circle (3pt);
\draw[xshift=11.655cm, yshift=-4.375cm, fill=black] (180:1.75cm) circle (3pt);
\draw[xshift=11.655cm, yshift=-4.375cm, fill=black] (216:1.75cm) circle (3pt);
\draw[xshift=11.655cm, yshift=-4.375cm, fill=black] (252:1.75cm) circle (3pt);
\draw[xshift=11.655cm, yshift=-4.375cm, fill=black] (288:1.75cm) circle (3pt);
\draw[xshift=11.655cm, yshift=-4.375cm, fill=black] (324:1.75cm) circle (3pt);
    \draw[xshift=11.655cm, yshift=-4.375cm,thick,line width=2pt] (0:1.75cm) \foreach \x in {36, 72,...,359} {
            -- (\x:1.75cm) 
        } -- cycle (36:1.75cm);
\draw[xshift=11.655cm, yshift=-4.375cm, fill=black] (0,0) circle (3pt);
    \draw[xshift=11.655cm, yshift=-4.375cm] (0:1.75cm)--(0,0) (72:1.75cm)--(0,0) (144:1.75cm)--(0,0) (216:1.75cm)--(0,0) (288:1.75cm)--(0,0);
    \draw[xshift=11.655cm, yshift=-4.375cm] (0:1.75cm)--(72:1.75cm)--(144:1.75cm)--(216:1.75cm)--(288:1.75cm)--cycle;
    \draw[xshift=11.655cm, yshift=-4.375cm, dashed] (36:1.75cm)--(0,0) (108:1.75cm)--(0,0) (180:1.75cm)--(0,0) (252:1.75cm)--(0,0) (324:1.75cm)--(0,0);
    \draw[xshift=11.655cm, yshift=-4.375cm, dashed] (36:1.75cm)--(108:1.75cm)--(180:1.75cm)--(252:1.75cm)--(324:1.75cm)--cycle;
\end{tikzpicture}
\captionof{figure}{ }
\end{center}
In non-regular polyhedra, cyclic sequences with the property described above can have self intersections.
For example, a $3$-gonal bipyramid (Fig.2) has only one such sequence (up to reversing).
\begin{center}
\begin{tikzpicture}[scale=0.3]


\coordinate (A) at (0.5,4);
\coordinate (B) at (0.5,-5);
\coordinate (1) at (0,-1);
\coordinate (2) at (-2,0);
\coordinate (3) at (3,0);

\draw[fill=black] (A) circle (3.5pt);
\draw[fill=black] (B) circle (3.5pt);

\draw[fill=black] (1) circle (3.5pt);
\draw[fill=black] (2) circle (3.5pt);
\draw[fill=black] (3) circle (3.5pt);

\draw[thick] (3)--(1)--(2);
\draw[thick, dashed] (2)--(3);

\draw[thick] (A)--(1);
\draw[thick] (A)--(2);
\draw[thick] (A)--(3);

\draw[thick] (B)--(1);
\draw[thick] (B)--(2);
\draw[thick] (B)--(3);

\end{tikzpicture}
\captionof{figure}{ }
\end{center}

Analogues of Petrie polygons for {\it maps}, i.e. $2$-cell embeddings of graphs in closed $2$-dimensional surfaces, 
are known as {\it zigzags} \cite{DDS-book,Lins1} or {\it closed left-right paths} \cite{GR-book, Shank}.
General results on zigzags in plane graphs can be found in \cite[Chapter 17]{GR-book}.
A large portion of material given in \cite{DDS-book} concerns zigzags in three-regular plane graphs related to mathematical chemistry,
in particular, well-known {\it fullerenes}.
In both these books, the case when there is a single zigzag is distinguished.
Following \cite{DDS-book}, we call maps satisfying this condition $z$-{\it knotted}. 

A {\it Gauss code} is a word, where each symbol occurs precisely twice \cite[Section 17.4]{GR-book}.
There is a problem related to realizing  Gauss codes as closed curves with simple self-intersections in closed $2$-dimensional  surfaces.
It is not difficult to see that a map is $z$-knotted if and only if there is a zigzag passing trough each edge twice.
The latter is equivalent to the fact that the corresponding medial graph 
(the graph whose vertex set is formed by all edges of the map, 
and two edges are adjacent if they have a common vertex and belong to the same face, see  the bold graph on Fig.3)
is a realization of a certain Gauss code.
We refer \cite[Section 17.7]{GR-book} for the case of plane graphs
and \cite{CrRos, Lins2} for the general case.
\begin{center}
\begin{tikzpicture}[scale=1.2]

\draw[thick] (0.5,0) -- (1,0) -- (3,0) -- (3.5,0);
\draw[thick] (1.5,1) -- (2,1) -- (4,1) -- (4.5,1);
\draw[thick] (0.65,-0.35) -- (1,0) -- (2,1) -- (2.35,1.35);
\draw[thick] (2.65,-0.35) -- (3,0) -- (4,1) -- (4.35,1.35);

\draw[thick] (1.65,1.35) -- (2,1) -- (3,0) -- (3.35,-0.35);

\draw[thick, line width=2pt] (1.15,0.85) -- (1.5,0.5) -- (2,0) -- (2.35,-0.35);

\draw[thick, line width=2pt] (3.35,1.35) -- (3,1) -- (2,0) -- (1.65,-0.35);

\draw[thick, line width=2pt] (2.65,1.35) -- (3,1) -- (3.5,0.5) -- (3.85,0.15);

\draw[thick, line width=2pt] (1,0.5) -- (1.5,0.5) -- (2.5,0.5) -- (3.5,0.5) -- (4,0.5);

\draw[fill=white] (1,0) circle (2pt);
\draw[fill=white] (3,0) circle (2pt);
\draw[fill=white] (2,1) circle (2pt);
\draw[fill=white] (4,1) circle (2pt);

\draw[fill=black] (2,0) circle (2pt);

\draw[fill=black] (1.5,0.5) circle (2pt);
\draw[fill=black] (2.5,0.5) circle (2pt);
\draw[fill=black] (3.5,0.5) circle (2pt);

\draw[fill=black] (3,1) circle (2pt);

\end{tikzpicture}
\captionof{figure}{}
\end{center}

The concept of $z$-knottedness  can be described in terms of ${\mathbb Z}_{2}$-homologies.
First of all, we observe that every zigzag  is also a zigzag in the dual map and conversely.
If our map is not $z$-knotted, then there is a zigzag passing through some edges once.
These edges form a bicycle, i.e. a cycle which is also a cycle in the dual map.
By  \cite[Theorem 17.3.5]{GR-book}, a plane graph is $z$-knotted if and only if it does not contain non-trivial bicycles.
This is equivalent to the fact that the corank of the Laplacian over ${\mathbb Z}_{2}$ is equal to $1$
and well-known Kirchhoff's theorem implies that a plane graph is $z$-knotted if and only if 
the number of spanning trees in this graph is odd \cite[Sections 14.15 and 17.8]{GR-book}.
In the general case, we consider the vector space over ${\mathbb Z}_{2}$ formed by all bicycles
with the symmetric difference as an additive operation. 
For a $z$-knotted map this vector space does not contain boundaries and coboundaries and 
its dimension is equal to the  corresponding Betti number, i.e.  there is a natural one-to-one correspondence 
between bicycles and $1$-dimensional ${\mathbb Z}_{2}$-homologies  \cite[Section 4]{CrRos}.

In this paper, we investigate zigzags in triangulations of closed $2$-dimensional surfaces.
It is well-known that an $n$-gonal bipyramid is a $z$-knotted triangulation of the sphere ${\mathbb S}^{2}$ if $n$ is odd.
Two examples of $z$-knotted fullerenes can be found in \cite[Section 2.3]{DDS-book}
(the duals are $z$-knotted triangulations of ${\mathbb S}^{2}$). 
A large class of  plane $z$-knotted graphs (in particular, $z$-knotted fullerenes) was obtained using computer \cite{DDS-book}. 

Our main result (Theorem \ref{th-main}) states that every triangulation of any closed $2$-dimensional surface 
admits a $z$-knotted shredding.

This statement will be proved in two steps.
First, we introduce the $z$-monodromy $M_{F}$ which acts on the set of oriented edges of a face $F$ in a triangulation.
For two consecutive oriented edges $e_{0}$ and $e$ in $F$ we consider the unique zigzag containing the sequence $e_{0},e$
and define $M_{F}(e)$ as the first (oriented) edge of $F$ which occurs in this zigzag after $e$.
There are precisely seven possibilities for the $z$-monodromies and only four possibilities can be realized in 
$z$-knotted triangulations (Theorem \ref{th-1}).
Examples show that each of these seven possibilities happens in a certain triangulation; 
moreover, the four possibilities corresponding to the $z$-knotted case are realized in $z$-knotted triangulations (Section 5).
If the $z$-monodromies of all faces in a triangulation are of $z$-knotted type, then this triangulation is $z$-knotted
(Theorem \ref{th-2}).
The second step is the gluing lemma concerning zigzags in the connected sum of triangulations (Section 6).
Using this lemma, we replace every face whose $z$-monodromy is not of $z$-knotted type by
some number of faces with the $z$-monodromies of $z$-knotted types and get a $z$-knotted shredding.

Other application of the gluing lemma is a description of all cases when the connected sum of $z$-knotted triangulations
is $z$-knotted (Theorem \ref{th-3}).

\section{Zigzags in triangulations}
We describe some elementary properties of zigzags in triangulations (which also hold for other maps). 

Let $M$ be a connected closed $2$-dimensional surface (not necessarily orientable)
and let $\Gamma$ be a triangulation of $M$, i.e.
a $2$-cell embedding of a connected simple finite graph in $M$ whose faces are triangles \cite[Section 3.1]{MT-book}.
The following properties are immediately consequences of the definition:
(1) every edge is contained in precisely  two distinct faces,
(2) the intersection of two distinct faces is an edge or a vertex or empty.

Two distinct edges are called  {\it adjacent} if there is a face containing them.
Since every face is a triangle, any two adjacent edges have a common vertex.  
Two faces are said to be {\it adjacent} if their intersection is an edge.

A {\it zigzag}  in $\Gamma$ is a sequence of edges $\{e_{i}\}_{i\in {\mathbb N}}$ satisfying the following conditions for every $i\in {\mathbb N}$:
\begin{enumerate}
\item[(Z1)] $e_{i},e_{i+1}$ are adjacent,
\item[(Z2)] the faces containing $e_{i},e_{i+1}$ and $e_{i+1},e_{i+2}$ are distinct 
and the edges $e_{i}$ and $e_{i+2}$ are disjoint. 
\end{enumerate} 
Since $\Gamma$ is finite, 
for every zigzag $Z=\{e_{i}\}_{i\in {\mathbb N}}$
there is a natural number $n>0$ such that $e_{i+n}=e_{i}$ for every $i\in {\mathbb N}$. 
The smallest number $n$ satisfying this condition is called the {\it length} of $Z$.
So, our zigzag is the {\it cyclic sequence} $e_{1},\dots, e_{n}$, where $n$ is the length of $Z$.
We say that $Z$ is {\it edge-simple} if all edges in this cyclic sequence are mutually distinct.
Note that $Z$ can be presented as a cyclic sequence of vertices $v_{1},\dots,v_{n}$,
where $v_{i}$ and $v_{i+1}$ are the vertices belonging to $e_{i}$ for $i<n$
and the edge $e_{n}$ contains $v_{n}$ and $v_{1}$.
The zigzag $Z$ is called {\it simple} if these vertices are mutually distinct.
All zigzags of the Platonic solids are simple (see Fig.1).
It is clear that a simple zigzag is edge-simple, but an edge-simple zigzag is not necessarily simple.

Observe that every zigzag $\{e_{i}\}_{i\in {\mathbb N}}$ is completely determined by any pair of consecutive edges $e_{i},e_{i+1}$.
If $X=\{e_{1},\dots, e_{n}\}$ is a sequence of edges, 
then $X^{-1}$ denotes the reversed sequence $e_{n},\dots,e_{1}$.
If $Z$ is a zigzag, then the same holds for $Z^{-1}$.
If $Z$ contains a sequence $e,e'$, then the sequence $e',e$ is contained in the reversed zigzag $Z^{-1}$.
Sequences of type $e,e',\dots,e',e$ are not contained in zigzags, i.e. a zigzag cannot be reversed to itself.
Indeed, if such a sequence is contained in a zigzag, then this zigzag is a sequence of type
$$e,e',e_{1},e_{2},\dots,e_{m},e_{m},\dots,e_{2},e_{1},e',e,\dots$$
and there are two consecutive edges which are the same, a contradiction. 

We say that $\Gamma$ is $z$-{\it knotted} 
if it contains only one pair of zigzags $Z,Z^{-1}$, in other words, there is a single zigzag up to reversing.

\begin{lemma}\label{l-zk}
If $\Gamma$ is $z$-knotted, then each of the two zigzags passes through every edge twice.
Conversely, if $\Gamma$ contains a zigzag passing through every edge twice, then it is $z$-knotted.
\end{lemma}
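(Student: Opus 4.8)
The plan is to model the ways a zigzag can run through a fixed edge and then argue by counting together with the reversal involution. Fix an edge $e$ with endpoints $u,v$ and the two faces $F,F'$ containing it (condition (E1)). I would first observe that there are exactly four \emph{passages} through $e$, indexed by a choice of incoming endpoint in $\{u,v\}$ and incoming face in $\{F,F'\}$; for such a choice the outgoing endpoint and outgoing face are forced to be the complementary ones, since in a zigzag the vertex shared by $e_i,e_{i+1}$ and the vertex shared by $e_{i+1},e_{i+2}$ are the two distinct endpoints of $e_{i+1}$, while by (Z2) the two incident faces are distinct (and there are only two, by (E1)). Each passage thus prescribes the predecessor and successor of $e$, i.e. a consecutive pair, and hence, by the determination property recorded before the statement (a zigzag is completely determined by any consecutive pair), the entire zigzag running through $e$ in that way. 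Reversal carries a passage to another passage through the same edge, swapping incoming and outgoing data; since no zigzag is its own reverse, this is a fixed-point-free involution, so it partitions the four passages through $e$ into two reverse-pairs.

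Next I would record the bookkeeping. Each traversal of $e$ by an oriented zigzag realises exactly one passage; distinct traversals realise distinct passages (a repeated consecutive pair would force a shorter period); and every passage lies on exactly one oriented zigzag (extend it forward and backward by (Z1),(Z2), which closes up since $\Gamma$ is finite). Finally, because reversal sends the passages of $Z$ at $e$ bijectively to those of $Z^{-1}$ at $e$, and since a single passage cannot lie on both $Z$ and $Z^{-1}$ (as $Z\neq Z^{-1}$), the number of times $Z$ runs through $e$ equals the number of times $Z^{-1}$ does.

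For the first assertion, suppose $\Gamma$ is $z$-knotted, so that $Z$ and $Z^{-1}$ are the only oriented zigzags. Then the four passages through $e$ are disjointly accounted for by $Z$ and $Z^{-1}$, and by the equality of their traversal counts they split evenly, two and two. Hence each of $Z$ and $Z^{-1}$ runs through $e$ exactly twice; as $e$ was arbitrary, this proves the claim.

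For the converse, assume $Z$ passes through every edge twice, so at a fixed $e$ it realises two distinct passages $p,q$. The crux of the argument, and the step I expect to be the main obstacle, is to rule out that these two passages are mutually reverse, i.e. that $q=\bar p$. Indeed, suppose $q=\bar p$ and write $p$ as the consecutive triple $a,e,b$, so that $\bar p$ is $b,e,a$. From $p$ the zigzag $Z$ contains the consecutive pair $a,e$, whence $Z^{-1}$ contains $e,a$; from $\bar p$ the zigzag $Z$ itself contains $e,a$. Since a zigzag is determined by any one consecutive pair, $Z$ and $Z^{-1}$ would coincide, contradicting the fact established above that a zigzag cannot be reversed to itself. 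Therefore $\{p,q\}$ and its reverse $\{\bar p,\bar q\}$ are disjoint and together exhaust the four passages through $e$, with $Z^{-1}$ realising exactly the reversed pair. As every passage through every edge thus lies on $Z$ or on $Z^{-1}$, and every oriented zigzag is built from such passages, there is no further zigzag; hence $\Gamma$ is $z$-knotted.
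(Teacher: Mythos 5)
Your proof is correct and takes essentially the same route as the paper's: your four ``passages'' through a fixed edge are exactly the paper's zigzags $Z_{1}(e)$, $Z_{2}(e)$ and their reverses, and both arguments rest on the determination of a zigzag by one consecutive pair together with the fact that no zigzag is reversed to itself. The paper merely states this more tersely; your version makes the counting and the exclusion of mutually reverse passages explicit.
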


This statement can be found in \cite{DDS-book,GR-book},
but some arguments from its proof will be used in what follows.
For this reason, we present this proof below.

\begin{proof}[Proof of Lemma \ref{l-zk}]
Every edge $e$ is contained in precisely two distinct faces $F_{1}$ and $F_{2}$. 
We fix a vertex on $e$ and denote by $e_{i}$, $i=1,2$ the edge in $F_{i}$ containing this fixed vertex and distinct from $e$ (Fig.4).
Let $Z_{i}(e)$,  $i=1,2$ be the zigzag containing the sequence $e,e_{i}$.
\begin{center}
\begin{tikzpicture}[scale=0.8]

\draw[fill=black] (1,0) circle (2.5pt);
\draw[fill=black] (3,0) circle (2.5pt);
\draw[fill=black] (2,2) circle (2.5pt);
\draw[fill=black] (2,-2) circle (2.5pt);

\draw[thick] (2,2) -- (1,0)-- (3,0)-- cycle;
\draw[thick] (2,-2) -- (1,0)-- (3,0)-- cycle;

\node at (1.25,1.1) {$e_{1}$};
\node at (1.25,-1.1) {$e_{2}$};
\node at (2,0.18) {$e$};
\node at (2,0.85) {$F_{1}$};
\node at (2,-0.9) {$F_{2}$};
\end{tikzpicture}
\captionof{figure}{}
\end{center}
If $\Gamma$ is $z$-knotted, i.e. it contains only one pair of zigzags $Z,Z^{-1}$, 
then each of these zigzags coincides with every $Z_{i}(e)$ or its reverse.
This implies that it passes through $e$ twice.

If a zigzag passes through each edge twice, then it coincides with every $Z_{i}(e)$ or its reverse for all edges $e$.
This is possible only in the case when $\Gamma$ is $z$-knotted.
\end{proof}

\section{Main result}
Let $\Gamma$ and $\Gamma'$ be triangulations of (connected closed $2$-dimensional)  surfaces $M$ and $M'$, respectively.
Suppose that $F$ is a face in $\Gamma$ and $F'$ is a face in $\Gamma'$.
By our assumption, $F$ and $F'$ both are homeomorphic to a closed $2$-dimensional disc 
and each of the boundaries $\partial F$ and $\partial F'$ is the sum of three edges. 
Let $g: \partial F \to \partial F'$ be a homeomorphism transferring every vertex of $F$ to a vertex of $F'$, i.e.
if $v_{i}$, $i\in \{1,2,3\}$ are the vertices of $F$, then $v'_{i}=g(v_{i})$, $i\in \{1,2,3\}$ are the vertices of $F'$.
In what follows, such boundary homeomorphisms will be called {\it special}.

We define the {\it connected sum} $\Gamma \#_{g} \Gamma'$.
We remove the interiors of $F$ and $F'$ from $M$ and $M'$ (respectively)
and glue together the boundaries $\partial F$ and $\partial F'$ such that every $v_{i}$ is identified with $v'_{i}$.
We get a triangulation of the connected sum $M\# M'$ which will be denoted by $\Gamma \#_{g} \Gamma'$.
The vertex set of $\Gamma \#_{g} \Gamma'$ is the union of the vertex sets of $\Gamma$ and $\Gamma'$,
where every $v_i$ is identified with $v'_i$, and the edge set is the union of the edge sets of $\Gamma$ and $\Gamma'$, 
where the edge containing $v_i,v_j$ is identified with the edge containing $v'_i,v'_j$.
Every face of $\Gamma \#_{g} \Gamma'$ is a face of $\Gamma$ or $\Gamma'$ different from $F,F'$.

Note that for other special homeomorphism $h: \partial F \to \partial F'$ the graph $\Gamma \#_{h} \Gamma'$ 
is not necessarily isomorphic to $\Gamma \#_{g} \Gamma'$.
See Fig.9 and Fig.10 for non-isomorphic connected sums of $3$-gonal bipyramids.

Using the connected sums of triangulations, we describe the concept of {\it shredding} of a triangulation.
As above, we suppose that $\Gamma$  is a triangulation of a surface $M$.
Let $F_{1},\dots,F_{k}$ be mutually distinct faces of $\Gamma$ and 
let $\Gamma_{1},\dots, \Gamma_{k}$ be triangulations of the sphere ${\mathbb S}^2$.
For each $i\in \{1,\dots,k\}$  we take a face $F'_i$ in $\Gamma_{i}$ and a special homeomorphism $g_{i}:\partial F_{i}\to \partial F'_{i}$.
The connected sum
\begin{equation}\label{eq-sum}
(((\Gamma \#_{g_{1}} \Gamma_{1})\#_{g_{2}} \Gamma_{2})\dots)\#_{g_{k}}\Gamma_{k}
\end{equation}
is a triangulation of $M$, where every $F_i$ is replaced by a triangulation of a $2$-dimen\-sional disc. 
Every triangulation of $M$ obtained from $\Gamma$ in such a way is said to be a {\it shredding} of $\Gamma$. 

\begin{theorem}\label{th-main}
Every triangulation $\Gamma$ of any connected closed $2$-dimensional surface admits a $z$-knotted shredding.
Suppose that $\Gamma$ contains precisely $2m$ zigzags, i.e. $m$ zigzags up to reversing, and $m>1$.
Then there are $z$-knotted triangulations $\Gamma_{1},\dots, \Gamma_{k}$ of the sphere ${\mathbb S}^2$
such that $k\le m-1$ and the connected sum \eqref{eq-sum} is $z$-knotted.
\end{theorem}

\section{$Z$-monodromy}
Let $\Gamma$ be a triangulation and let $F$ be a face in $\Gamma$ whose vertices are denoted by $a,b,c$.
Let also $\Omega(F)$ be the set consisting of all oriented edges of $F$. Then
$$\Omega(F)=\{ab,bc,ca,ac,cb,ba\},$$
where $xy$ is  the edge from $x\in\{a,b,c\}$ to $y\in\{a,b,c\}$.
If $e$ is the edge $xy$, then we write $-e$ for the edge $yx$.

Consider the permutation 
$$D_{F}=(ab,bc,ca)(ac,cb,ba)$$
on the set $\Omega(F)$ (which is the composition of two commuting $3$-cycles).
If $x,y,z$ are three mutually distinct vertices of $F$, then $D_{F}(xy)=yz$.
The equality $D_{F}(e)=e'$ implies that $D_{F}(-e')=-e$.

For any $e\in \Omega(F)$ we take $e_{0}\in \Omega(F)$ such that $D_{F}(e_{0})=e$
and consider the zigzag $Z$ containing the sequence $e_{0},e$ 
(recall that every zigzag is completely determined by any pair of consecutive edges).
If $e'$ is the first element of  $\Omega(F)$ contained in the zigzag $Z$ after $e$,
then we define $M_{F}(e)=e'$, Fig.5.
The transformation $M_{F}$ of $\Omega(F)$ will be called the $z$-{\it monodromy} associated to the face $F$.
\begin{center}
\begin{tikzpicture}[scale=0.8]

\draw[fill=black] (5.1961524228,2) circle (3pt);
\draw[fill=black] (3.4641016152,-1) circle (3pt);
\draw[fill=black] (6.9282032304,-1) circle (3pt);

\draw [thick, decoration={markings,
mark=at position 0.54 with {\arrow[scale=1.5,>=stealth]{<}}},
postaction={decorate}] (5.1961524228,2) -- (3.4641016152,-1);

\draw [thick, decoration={markings,
mark=at position 0.52 with {\arrow[scale=1.5,>=stealth]{>}}},
postaction={decorate}] (3.4641016152,-1) -- (6.9282032304,-1);

\draw [thick, decoration={markings,
mark=at position 0.54 with {\arrow[scale=1.5,>=stealth]{>}}},
postaction={decorate}] (5.1961524228,2) -- (6.9282032304,-1);

\node at (4,0.65) {$e_0$};
\node at (6.3,0.65) {$e$};
\node at (5.1961524228,-1.5) {$M_F(e)$};

\node at (6.35,-2.2) {$Z$};


\draw [black, thick, dashed, decoration={markings,
mark=at position 0.4 with {\arrow[scale=1.5,>=stealth]{>}}},
postaction={decorate}] plot [smooth] coordinates {(6.05,0.44) (6.5,0.2) (7.9,-0.8) (8.25,-2) (7.25,-2.5) (2,-2.5) (2.5,-1.5) (5.2,-1)};
\end{tikzpicture}
\captionof{figure}{ }
\end{center}

\begin{exmp}\label{exmp-t}{\rm
Let $e_{1},\dots, e_{n}$ be a zigzag of $\Gamma$
(we consider this zigzag as a cyclic sequence of non-oriented edges).
For every $i\le n-1$ we denote by $F_{i}$ the face containing the edges $e_{i}$ and $e_{i+1}$
and we write $F_{n}$ for the face which contains $e_{n}$ and $e_{1}$. 
It is easy to see that this zigzag is edge-simple if and only if the faces $F_{1},\dots,F_{n}$ are mutually distinct.
In this case, we have $M_{F_{i}}(e_{i+1})=e_{i}$ for every $i\le n-1$ and $M_{F_{n}}(e_{1})=e_{n}$,
where $e_{1},\dots, e_{n}$ are considered as the edges oriented according to the direction of the zigzag.
Therefore, if all zigzags of $\Gamma$ are edge-simple, then $M_{F}=(D_{F})^{-1}$ for every face $F$.
Conversely, if the latter equality holds for every face $F$, then for any zigzag $e_{1},\dots, e_{n}$
the corresponding faces $F_{1},\dots,F_{n}$ are mutually distinct which implies that all zigzags are edge-simple.
So, we have $M_{F}=(D_{F})^{-1}$ for every face $F$ if and only if each zigzag of $\Gamma$ is edge-simple.
All zigzags are simple (and consequently edge-simple) in the following triangulations:
\begin{enumerate}
\item[$\bullet$] tetrahedrons, octahedrons and icosahedrons (three Platonic solids whose faces are triangles),
\item[$\bullet$] the torus triangulation obtained from a grid with diagonals,
\item[$\bullet$] the triangulation of the real projective plane presented on Figure 6.
\end{enumerate}
In Example \ref{m5}, we show that all zigzags in a $(2k)$-gonal bipyramid are edge-simple if $k$ is even.
Note that these zigzags are simple only for $k=2$ (the octahedron case). 
}\end{exmp}

\begin{center}
\begin{tikzpicture}[scale=0.5]
\draw[fill=black](0,0) circle (3.5pt);
\draw[fill=black] (8,0) circle (3.5pt);
\draw[fill=black] (0,2) circle (3.5pt);
\draw[fill=black] (8,2) circle (3.5pt);
\draw[fill=black] (4,2) circle (3.5pt);
\draw[fill=black] (4,4) circle (3.5pt);
\draw[fill=black] (4,-2) circle (3.5pt);
\draw[fill=black] (2,0) circle (3.5pt);
\draw[fill=black] (6,0) circle (3.5pt);

\draw[thick] (0,2)--(4,4)--(8,2)--(8,0)--(4,-2)--(0,0)--cycle;
\draw[thick] (0,2)--(2,0)--(4,2)--(6,0)--(8,2);
\draw[thick] (0,2)--(4,2)--(8,2);
\draw[thick] (0,0)--(2,0)--(6,0)--(8,0);
\draw[thick] (4,2)--(4,4);
\draw[thick] (2,0)--(4,-2)--(6,0);

\node at (4.35,4.25) {$a$};
\node at (8.4,2) {$b$};
\node at (8.4,0) {$c$};
\node at (4.35,-2.25) {$a$};
\node at (-0.4,0) {$b$};
\node at (-0.4,2) {$c$};
\node at (2.1,0.5) {$f$};
\node at (3.9,1.5) {$d$};
\node at (6,0.4) {$e$};

\end{tikzpicture}
\captionof{figure}{ }
\end{center}

Denote by ${\mathcal Z}(F)$ the set of all zigzags containing the sequences $e,D_{F}(e)$ with $e\in \Omega(F)$.

\begin{lemma}\label{l-0}
The following assertions are fulfilled:
\begin{enumerate}
\item[(1)] A zigzag belongs to ${\mathcal Z}(F)$ if and only if it contains at least one edge of $F$.
\item[(2)] If a zigzag belongs to ${\mathcal Z}(F)$, then the same holds for the reversed zigzag.
\item[(3)] $|{\mathcal Z}(F)|$ is equal to $2$ or $4$ or $6$.
\end{enumerate}
\end{lemma}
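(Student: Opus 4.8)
The plan is to reduce everything to the observation that $\Omega(F)$ has exactly six elements and that the six pairs $(e,D_F(e))$, $e\in\Omega(F)$, are precisely the six ``turns'' of the triangle $F$ that stay inside $F$. Since a zigzag is completely determined by any pair of consecutive edges, each pair $e,D_F(e)$ determines a unique zigzag, which I denote $Z(e)$; by definition ${\mathcal Z}(F)=\{Z(e):e\in\Omega(F)\}$, a set of at most six elements, which already yields the upper bound in (3).

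For part (1) the forward implication is immediate: if $Z\in{\mathcal Z}(F)$ then $Z$ contains some sequence $e,D_F(e)$ with $e\in\Omega(F)$, hence it passes through the underlying edge of $e$, which is an edge of $F$. For the converse I would take a zigzag $Z$ passing through an edge of $F$ and orient that edge in the direction in which $Z$ traverses it, say as $ab$. Reading $Z$ as a vertex sequence $\dots,a,b,\dots$, its predecessor edge ends at $a$ and its successor edge starts at $b$. The underlying edge lies in exactly two faces, one of them $F$; by (E1) and (Z2) the face shared with the predecessor and the face shared with the successor are these two faces in some order, so exactly one equals $F$. If the successor lies in $F$, it is forced to be $bc=D_F(ab)$ and $Z$ contains $ab,D_F(ab)$; if the predecessor lies in $F$, it is forced to be $ca$, and since $D_F(ca)=ab$ the zigzag contains $ca,D_F(ca)$. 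Either way $Z\in{\mathcal Z}(F)$.

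Part (2) then costs nothing: $Z$ and $Z^{-1}$ run through the same underlying edges, so if $Z$ meets an edge of $F$ then so does $Z^{-1}$, and part (1) gives $Z^{-1}\in{\mathcal Z}(F)$. (One could also argue directly from the identity $D_F(e)=e'\Rightarrow D_F(-e')=-e$: if $Z$ contains $e,D_F(e)$, then $Z^{-1}$ contains $-D_F(e),-e=f,D_F(f)$ with $f=-D_F(e)$.) For the lower bound in (3) I would note that ${\mathcal Z}(F)\neq\varnothing$ — for instance $ab,bc$ lies in some zigzag — and then combine part (2) with the earlier fact that no zigzag is its own reverse: each $Z\in{\mathcal Z}(F)$ is accompanied by a distinct $Z^{-1}\in{\mathcal Z}(F)$, so $|{\mathcal Z}(F)|\ge 2$.

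The only step requiring genuine care is the converse in part (1): one must keep the orientations consistent and use (E1) together with (Z2) to guarantee that exactly one of the two faces meeting the chosen edge is $F$, so that the turn recorded by $Z$ is a genuine sequence $e,D_F(e)$ rather than a turn through the other face. Everything else is bookkeeping with the two $3$-cycles comprising $D_F$.
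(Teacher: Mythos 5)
Your proof is correct and follows essentially the same route as the paper's: the forward direction of (1) and the upper bound in (3) are immediate from the definition, the converse of (1) is the same two-faces/orientation argument via (E1) and (Z2) (which the paper states more tersely, simply asserting that one of the two possibilities $xy,D_F(xy)$ or $e,xy$ with $D_F(e)=xy$ occurs), and the lower bound combines (2) with the fact that no zigzag is self-reversed. The only cosmetic difference is that you derive (2) from (1) by noting $Z$ and $Z^{-1}$ share underlying edges, whereas the paper argues directly with $D_F(-e')=-e$ --- an argument you also give parenthetically.
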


\begin{proof}
(1). Suppose that a zigzag passes through the edge of $F$ containing vertices $x,y\in \{a,b,c\}$ and goes from $x$ to $y$.
Then one of the following possibilities is realized: this zigzag contains the sequence $xy,D_{F}(xy)$,
or it contains the sequence $e,xy$ such that $D_{F}(e)=xy$.
In each of these cases, the zigzag belongs to ${\mathcal Z}(F)$.

(2). If a zigzag belongs to ${\mathcal Z}(F)$, then it contains the sequence $e,e'$, where $e\in \Omega(F)$ and $e'=D_{F}(e)$.
The reversed zigzag contains the sequence $-e',-e$.
Since $D_{F}(-e')=-e$, it belongs to  ${\mathcal Z}(F)$.

(3). The set $\Omega(F)$ consists of $6$ elements, but for some distinct $e,e'\in {\Omega}(F)$
the zigzags containing the sequences $e,D_{F}(e)$ and $e',D_{F}(e')$ can be coincident.
In this case, the reversed zigzags also are coincident.
\end{proof}

We say that $\Gamma$ is {\it locally $z$-knotted} for $F$ if $|{\mathcal Z}(F)|=2$.
By Lemma \ref{l-0}, this holds if and only if  there is a single pair of zigzags $Z,Z^{-1}$ containing edges of $F$.

\begin{lemma}\label{l-zk-loc}
If $\Gamma$ is locally $z$-knotted for $F$, then every zigzag from ${\mathcal Z}(F)$ passes through each edge of $F$ twice.
Conversely, if there is a zigzag passing through each edge of $F$ twice, 
then $\Gamma$ is locally $z$-knotted for $F$.
\end{lemma}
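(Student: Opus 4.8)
The plan is to prove both implications by exploiting the key structural fact established in the proof of Lemma \ref{l-zk}: for each edge $e$ of $F$, fixing a vertex on $e$ determines two edges $e_1,e_2$ (one in each of the two faces containing $e$), and the zigzags $Z_i(e)$ containing the sequences $e,e_i$ are precisely the zigzags through $e$. Since $F$ has three edges and each edge lies in two faces, one of which is $F$ itself, I would first observe that all edges of $F$ and their associated zigzags lie in $\mathcal{Z}(F)$ by Lemma \ref{l-0}(1). The strategy is to count, for the locally $z$-knotted case $|\mathcal{Z}(F)|=2$, how many times the unique pair $Z,Z^{-1}$ must traverse each edge of $F$.

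For the forward direction, I would argue as follows. Suppose $\Gamma$ is locally $z$-knotted for $F$, so there is a single pair $Z,Z^{-1}$ of zigzags meeting $F$. Fix any edge $e$ of $F$ and a vertex $v$ on it. As in Lemma \ref{l-zk}, the edge $e$ is contained in exactly two distinct faces, and the two zigzags $Z_1(e),Z_2(e)$ through $e$ (determined by the choice of $v$) must each coincide with $Z$ or $Z^{-1}$, since these are the only zigzags available. The crucial point, borrowed directly from the Lemma \ref{l-zk} argument, is that $Z$ cannot equal $Z^{-1}$ (a zigzag cannot be reversed to itself), so the two occurrences of $e$ inside the single unoriented curve $Z$ are genuinely distinct passages. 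Hence $Z$ (and likewise $Z^{-1}$) passes through $e$ exactly twice. Running this over all three edges of $F$ gives the claim.

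For the converse, suppose some zigzag $W$ passes through each edge of $F$ twice. I would first note that $W$ meets $F$ (it passes through edges of $F$), so $W,W^{-1}\in\mathcal{Z}(F)$ by Lemma \ref{l-0}(1). It remains to show no \emph{other} zigzag meets $F$. The idea is local multiplicity counting: each edge of $F$ is traversed by zigzags a total number of times equal to twice its number of incident faces, but because each edge lies in exactly two faces (condition (E1)), each edge admits exactly two traversals in each direction-pair, i.e. the passages through $e$ are exactly accounted for by the zigzags $Z_1(e),Z_2(e)$. Since $W$ already uses both passages through each edge of $F$, every $Z_i(e)$ must coincide with $W$ or $W^{-1}$; thus every zigzag meeting an edge of $F$ is $W$ or $W^{-1}$, giving $|\mathcal{Z}(F)|=2$.

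The main obstacle I anticipate is making the passage-counting bookkeeping fully rigorous: I must verify that ``$W$ passes through each edge of $F$ twice'' genuinely saturates \emph{all} the passages an edge can receive, so that no room is left for a third zigzag. The clean way to handle this is to count incidences of zigzags with a fixed edge $e$ directly. For each of the two vertices $v,v'$ on $e$ and each of the two faces $F_1,F_2$ at $e$, a passage of a zigzag through $e$ corresponds to entering $e$ through one incident edge and leaving through another in one of these faces; the total number of such local passages through $e$ is exactly four, organized as two unoriented passages. Since the single zigzag $W$ already realizes two passages through each edge of $F$, and reversal pairs them, I conclude that the passages are exhausted and $\mathcal{Z}(F)=\{W,W^{-1}\}$. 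This local-to-global saturation argument is the delicate step, but it parallels exactly the global argument already carried out for Lemma \ref{l-zk}, so the same reasoning applies edge-by-edge rather than graph-wide.
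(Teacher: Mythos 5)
Your proposal is correct and takes essentially the same route as the paper, whose entire proof is the remark that it is ``similar to the proof of Lemma \ref{l-zk}'': you localize that argument, using Lemma \ref{l-0}(1) to identify ${\mathcal Z}(F)$ with the zigzags meeting $F$ and then running the passage-saturation count edge-by-edge over the three edges of $F$. One minor rewiring: the distinctness of the two passages of $Z$ through an edge $e$ of $F$ comes from $e_{1}\neq e_{2}$ (the two continuations at the fixed vertex lie in the two distinct faces at $e$, and a zigzag cannot turn back at the same vertex), not from $Z\neq Z^{-1}$; the no-self-reversal fact is instead what caps the count at exactly two and, in the converse, guarantees that the two passages of $W$ through $e$ are not reverses of each other, so that $W$ and $W^{-1}$ together exhaust all four directed passages and leave no room for a third zigzag.
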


\begin{proof}
Similar to the proof of Lemma \ref{l-zk}.
\end{proof}

The main result of this section is a description of all possibilities for the $z$-monodromy.  

 \begin{theorem}\label{th-1}
For the $z$-monodromy $M_{F}$ one of the following possibilities is realized:
\begin{enumerate}
\item[(M1)] $M_{F}$ is identity,
\item[(M2)] $M_{F}=D_{F}$,
\item[(M3)] $M_{F}=(-e_{1},e_{2},e_{3})(-e_{3},-e_{2},e_{1})$, where $(e_{1},e_{2},e_{3})$ is one of the cycles in the permutation  $D_{F}$,
\item[(M4)] $M_{F}=(e_{1},-e_{2})(e_{2},-e_{1})$, where $(e_{1},e_{2},e_{3})$ is one of the cycles in $D_{F}$
{\rm(}$e_{3}$ and $-e_{3}$ are fixed points{\rm)},
\item[(M5)] $M_{F}=(D_{F})^{-1}$,
\item[(M6)] $M_{F}=(-e_{1},e_{2},e_{3})(-e_{3},-e_{2},e_{1})$, where $(e_{1},e_{2},e_{3})$ is one of the cycles in the permutation $(D_{F})^{-1}$,
\item[(M7)] $M_{F}=(e_{1},e_{2})(-e_{1},-e_{2})$, where $(e_{1},e_{2},e_{3})$ is one of the cycles in $D_{F}$
{\rm(}$e_{3}$ and $-e_{3}$ are fixed points{\rm)}.
\end{enumerate}
The triangulation $\Gamma$ is locally $z$-knotted for $F$ if and only if one of the cases {\rm (M1)--(M4)} is realized.
\end{theorem}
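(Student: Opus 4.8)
The plan is to extract from the definition of $M_{F}$ two rigidity properties, to note that together they pin $M_{F}$ down to a short explicit list, and to match that list with (M1)--(M7). Throughout write $\sigma$ for the involution $e\mapsto -e$ of $\Omega(F)$. For $e\in\Omega(F)$ the zigzag $Z$ defining $M_{F}(e)$ has the form $\dots,D_{F}^{-1}(e),e,g_{1},\dots,g_{k},M_{F}(e),D_{F}(M_{F}(e)),\dots$, where by definition none of $g_{1},\dots,g_{k}$ is an edge of $F$; thus $e$ leaves $F$ (its predecessor $D_F^{-1}(e)$ lies in $F$, so by (Z2) the pair $e,g_{1}$ lies outside $F$) and $M_{F}(e)$ returns to $F$ (its predecessor $g_{k}$ is not in $F$, so by (Z2) its successor is the $F$-edge $D_{F}(M_{F}(e))$). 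First I would check that $M_{F}$ is a bijection: a returning edge $e'$ is always immediately followed by $D_{F}(e')$, so the pair $e',D_{F}(e')$ determines $Z$ uniquely, and tracing $Z$ back to its first $F$-edge recovers a unique $e$ with $M_{F}(e)=e'$.

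Next I would establish reversal symmetry: $\sigma M_{F}\sigma=M_{F}^{-1}$. Reversing the zigzag $Z$ above turns the block $e,g_{1},\dots,g_{k},M_{F}(e)$ into $-M_{F}(e),-g_{k},\dots,-g_{1},-e$, again containing no edge of $F$ strictly inside. Using the identity $D_{F}(e)=e'\Rightarrow D_{F}(-e')=-e$ (stated just before Example~\ref{exmp-t}), the predecessor of $-M_{F}(e)$ in $Z^{-1}$ is the $F$-edge $-D_{F}(M_{F}(e))$, so $-M_{F}(e)$ departs from $F$ and its first following $F$-edge is $-e$. Hence $M_{F}(-M_{F}(e))=-e$ for all $e$, which is exactly $\sigma M_{F}\sigma=M_{F}^{-1}$.

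The decisive step is to rule out $M_{F}(e)=-e$. Suppose $M_{F}(e)=-e$ and let $G$ be the face containing $e$ other than its $F$-face. Then $g_{1}$ is the unique edge of $G$ issuing from the head of $e$, while the edge preceding the return $-e$ is the unique edge of $G$ arriving at the head of $e$, namely $-g_{1}$. Thus $Z$ contains both $e,g_{1}$ and $-g_{1},-e$; reversing the latter pair gives $e,g_{1}$ again, so $Z^{-1}$ shares a consecutive pair with $Z$ and therefore $Z=Z^{-1}$, which is impossible since a zigzag cannot be reversed to itself. Hence $M_{F}(e)\ne -e$ for every $e$. Now the conditions $\sigma M_{F}\sigma=M_{F}^{-1}$ and $M_{F}(e)\ne -e$ together say precisely that $M_{F}\circ\sigma$ is a fixed-point-free involution of the six-element set $\Omega(F)$; there are exactly $5\cdot3\cdot1=15$ such permutations (geometrically they record how the three exterior excursions of the zigzags pair up the six oriented edges). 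The cyclic relabeling $a\mapsto b\mapsto c\mapsto a$ commutes with both $D_{F}$ and $\sigma$ and hence permutes these $15$ possibilities; they split into $7$ orbits --- the three fixed by the relabeling, which are $M_{F}=\mathrm{id}$, $D_{F}$, $D_{F}^{-1}$, and four orbits of size $3$ --- and comparing one representative of each orbit against the list identifies them with (M1)--(M7).

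Finally, for the local-$z$-knottedness criterion I would use that each zigzag in $\mathcal{Z}(F)$ meets $\Omega(F)$ in a single cycle of the map $e\mapsto D_{F}(M_{F}(e))$ (after a departing edge $e$ the next departing edge is $D_{F}(M_{F}(e))$, and every oriented edge departs in exactly one zigzag), so $|\mathcal{Z}(F)|$ equals the number of orbits of $D_{F}\circ M_{F}$. A one-line computation per type gives two orbits for (M1)--(M4) (there $D_{F}\circ M_{F}$ is $D_{F}$, $D_{F}^{-1}$, or another product of two $3$-cycles), six orbits for (M5) (where $D_{F}\circ M_{F}=\mathrm{id}$), and four orbits for (M6) and (M7); thus $|\mathcal{Z}(F)|=2$, i.e.\ $\Gamma$ is locally $z$-knotted for $F$, exactly in cases (M1)--(M4). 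The main obstacle is the exclusion of $M_{F}(e)=-e$: bijectivity, the reversal identity, and the final count (kept short by the cyclic symmetry) are formal, but it is precisely the impossibility of a return along the same edge that genuinely invokes the no-self-reversal property and forces the count down from the $76$ reversal-symmetric permutations to the $15$ that yield the seven types.
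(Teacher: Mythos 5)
Your proof is correct, and it rests on the same two facts that drive the paper's proof --- the reversal identity $M_{F}(-M_{F}(e))=-e$ (statement (1) of Lemma \ref{l-1}) and the exclusion $M_{F}(e)\ne -e$ (statement (3)), both extracted from the impossibility of a self-reversed zigzag --- but you finish both halves of the theorem by a genuinely different mechanism. For the classification, the paper additionally proves that every cycle of $M_{F}$ has length at most $3$ (statement (4) of Lemma \ref{l-1}) and then runs a case analysis on the cycle structure; you instead observe that the two facts say exactly that $M_{F}\sigma$, where $\sigma(e)=-e$, is a fixed-point-free involution of the six-element set $\Omega(F)$, so that there are precisely $15$ candidates, and these are exactly the $15$ permutations listed in (M1)--(M7); your use of the order-three relabelling symmetry (which commutes with $D_{F}$ and $\sigma$) reduces the matching to seven representatives, and the cycle-length property (4) becomes superfluous. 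For the local $z$-knottedness criterion, the paper proves Lemma \ref{l-2} (equivalence with $D_{F}M_{F}$ being a product of two distinct commuting $3$-cycles) via the blocks $[e,M_{F}(e)]$ together with the disjointness of the sets ${\mathcal X},{\mathcal Y}$ forcing $m\le 3$; your identity --- $|{\mathcal Z}(F)|$ equals the number of orbits of $D_{F}M_{F}$ on $\Omega(F)$ --- encodes the same zigzag--cycle correspondence more directly (the departing edges of a zigzag form a single cycle of $D_{F}M_{F}$, and every oriented edge departs in exactly one zigzag), avoids the disjointness argument, and the per-type orbit counts $2,2,2,2,6,4,4$ conclude the proof while also giving the exact value of $|{\mathcal Z}(F)|$ for each type, which the paper never states. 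What the paper's longer route buys is Lemma \ref{l-2} in precisely the reusable form needed later (in Lemma \ref{l-3} and Theorem \ref{th-3}); nothing downstream is lost on your route, since that criterion is an immediate corollary of your orbit formula. One small wording caveat: a zigzag in ${\mathcal Z}(F)$ meets $\Omega(F)$ in a cycle of $D_{F}M_{F}$ \emph{together with} its $M_{F}$-image (the returning edges), so it is your parenthetical formulation in terms of departing edges, not the phrase ``meets $\Omega(F)$ in a single cycle,'' that carries the argument.
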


In the next section, we give an example for each of the seven possibilities described in Theorem \ref{th-1}.

\begin{lemma}\label{l-1}
The following assertions are fulfilled: 
\begin{enumerate}
\item[(1)] The equality $M_{F}(e)=e'$ implies that $M_{F}(-e')=-e$.
\item[(2)] $M_{F}$ is bijective.
\item[(3)] $M_{F}(e)\ne -e$ for every $e\in \Omega(F)$.
\item[(4)] The length of every cycle in the permutation $M_{F}$ is not greater than $3$.
\end{enumerate}
\end{lemma}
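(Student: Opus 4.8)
The plan is to prove the four assertions in the given order, deducing (2), (3) and (4) from (1) together with the elementary properties of zigzags already recorded (determinacy by a consecutive pair, the reversal rule, and the impossibility of a zigzag being reversed to itself). Throughout I write $\sigma$ for the fixed-point-free involution $e\mapsto -e$ of $\Omega(F)$ and use repeatedly that $\sigma$ is compatible with $D_{F}$, in the sense that $D_{F}(e)=e'$ forces $D_{F}(-e')=-e$.

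For (1) I would unwind the definition of $M_{F}$ on one concrete zigzag and then pass to its reverse. Fix $e$, put $e_{0}=(D_{F})^{-1}(e)$, and let $Z$ be the zigzag through $e_{0},e$, so that $e$ is entered from the edge $e_{0}$ of $F$ and $e'=M_{F}(e)$ is the first edge of $F$ occurring in $Z$ after $e$. The key local point is that, precisely because $e'$ is the \emph{first} such edge, the edge of $Z$ immediately preceding $e'$ does not lie on $F$; hence by (Z2) the edge of $Z$ immediately \emph{following} $e'$ does lie on $F$, and inspecting the turn at the shared vertex identifies this following edge as $g:=D_{F}(e')$. Reversing, in $Z^{-1}$ the edge immediately preceding $-e'$ is $-g$, which lies on $F$ and satisfies $D_{F}(-g)=-e'$; thus $-g=(D_{F})^{-1}(-e')$, so $Z^{-1}$ is exactly the zigzag used to compute $M_{F}(-e')$. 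Since no edge of $F$ lies in $Z$ strictly between $e$ and $e'$, none lies in $Z^{-1}$ strictly between $-e'$ and $-e$, giving $M_{F}(-e')=-e$. Equivalently, (1) asserts $\sigma M_{F}\sigma=(M_{F})^{-1}$.

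Assertions (2) and (3) are then short. For (2), if $M_{F}(e_{1})=M_{F}(e_{2})=e'$ then (1) yields $M_{F}(-e')=-e_{1}$ and $M_{F}(-e')=-e_{2}$, whence $e_{1}=e_{2}$; so $M_{F}$ is injective, and as a self-map of the finite set $\Omega(F)$ it is bijective. For (3), suppose $M_{F}(e)=-e$ with $e=yz$, and let $F^{*}$ be the second face on $\{y,z\}$, with third vertex $w$. Tracing $Z$ as above, the edge leaving $e$ is the edge $zw$ of $F^{*}$, while the edge entering $-e$ is the same edge $zw$ traversed oppositely; hence $Z$ contains the consecutive pairs $(\{y,z\},\{z,w\})$ and $(\{z,w\},\{y,z\})$, i.e. a segment of the forbidden palindromic type $\epsilon,\eta,\dots,\eta,\epsilon$. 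This contradicts the non-self-reversibility of zigzags and proves (3).

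Finally, for (4) I would argue from (1) and (3) alone. The relation $\sigma M_{F}\sigma=(M_{F})^{-1}$ shows that $\sigma$ sends each cycle of $M_{F}$ to a cycle of $M_{F}$ of the same length. Let $C$ be a cycle of length $\ell\ge 4$. If $\sigma(C)\ne C$, then $C$ and $\sigma(C)$ are disjoint cycles inside the six-element set $\Omega(F)$, forcing $2\ell\le 6$, absurd; in particular $\ell=5$ is impossible, since a fixed-point-free involution cannot preserve a set of odd size. So $C=\sigma(C)$ with $\ell\in\{4,6\}$, and writing $C=(e_{1},\dots,e_{\ell})$ the relation forces $\sigma$ to act on the indices as a reflection $e_{i}\mapsto e_{c-i}$ with $c$ odd (oddness being exactly what makes this action fixed-point-free). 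Since $\ell$ is even, $2i\equiv c-1\pmod{\ell}$ has a solution, and at such an index $M_{F}(e_{i})=e_{i+1}=e_{c-i}=-e_{i}$, contradicting (3); hence every cycle has length at most $3$. I expect the only genuine obstacle to lie in the orientation bookkeeping of (1): one must check carefully that the edge following $e'$ in $Z$ is $D_{F}(e')$ rather than $(D_{F})^{-1}(e')$, as this is exactly what makes $Z^{-1}$ the zigzag governing $M_{F}(-e')$; once (1) is secured, the remaining assertions are formal.
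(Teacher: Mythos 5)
Your proof is correct, and for assertions (1)--(3) it follows essentially the paper's own route: (1) rests on the observation that the edge of $Z$ immediately following $e'=M_F(e)$ is $D_F(e')$ (you justify this via (Z2), which the paper merely asserts), so that $Z^{-1}$ contains the pair $(D_F)^{-1}(-e'),\,-e'$ and therefore computes $M_F(-e')=-e$; (2) is the same formal consequence; and (3) is the same self-reversal contradiction, except that the paper exhibits the repeated pair $e_0,e$ and $-e,-e_0$ lying inside $F$, while you exhibit $e,zw$ and $wz,-e$ straddling the adjacent face $F^{*}$ --- an immaterial difference. Where you genuinely diverge is (4): the paper takes three consecutive elements $e_1,e_2,e_3$ of a hypothetical long cycle and eliminates every candidate value of $M_F(e_3)$ by hand using (1)--(3), whereas you encode (1) as the relation $\sigma M_F\sigma=(M_F)^{-1}$ for the fixed-point-free involution $\sigma(e)=-e$, deduce that $\sigma$ permutes the cycles of $M_F$ preserving length, force any cycle of length $\ell\ge 4$ to be $\sigma$-invariant of even length with $\sigma$ acting on indices as a reflection $e_i\mapsto e_{c-i}$ with $c$ odd, and then solve $2i\equiv c-1\pmod{\ell}$ to produce an index with $M_F(e_i)=-e_i$, contradicting (3). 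I checked this dihedral argument and it is sound (the equation $\tau\rho\tau^{-1}=\rho^{-1}$ with $\tau$ induced by $\sigma$ really does force $\tau(i)=\tau(0)-i$); it is less elementary than the paper's case check but more structural, explaining the bound as the impossibility of a fixed-point-free orientation-reversing symmetry of a long cycle. One small gloss you share with the paper: in (1), to conclude that the edge preceding $e'$ does not lie on $F$ one must also note that the edge immediately \emph{following} $e$ is not on $F$ (by (Z2) applied to the pair $e_0,e$ together with (E2)), so that $e'$ cannot immediately follow $e$; this is a one-line addition and not a genuine gap.
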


\begin{proof}
(1). 
Let $e\in \Omega(F)$. Consider $e_{0}\in\Omega(F)$ satisfying $D_{F}(e_{0})=e$.
If $Z$ is the zigzag containing the sequence $e_{0},e$, 
then 
$$e'=M_{F}(e)\;\mbox{ and }\;e'_{0}=D_{F}M_{F}(e)$$ 
are the next two elements of $\Omega(F)$ in this zigzag.
Observe that $D_{F}(-e'_{0})=-e'$.
The reversed zigzag $Z^{-1}$ contains the sequence $-e'_{0},-e'$ and $-e$ is the first element of $\Omega(F)$ contained in $Z^{-1}$ after $-e'$.
This means that $M_{F}(-e')=-e$.

(2).
It is sufficient to show that $M_{F}$ is injective.
Suppose that $M_{F}(e)=M_{F}(e')=e''$.
By (1), we have $-e=M_{F}(-e'')=-e'$ which implies that $e=e'$.

(3). 
Let $e$ and $e_{0}$ be as in the proof of (1).
If $M_{F}(e)=-e$, then the zigzag containing the sequence $e_{0},e$ contains also the sequence  $-e,D_{F}(-e)$.
Since $D_{F}(-e)=-e_{0}$, this zigzag contains $e_{0},e$ together with the reversed sequences $-e,-e_{0}$
which is impossible.

(4).
Suppose that the permutation $M_{F}$ contains a cycle of the length greater than $3$. 
Let $e_1, e_2, e_3\in \Omega(F)$ be consecutive elements in this cycle. Then
$$M_{F}(e_1)=e_2,\;\;M_{F}(e_2)=e_3,\;\;M_{F}(e_{3})\ne e_{1}$$ 
and we have $M_{F}(e_{3})\ne -e_{3}$ by (3).
Therefore, $M_{F}(e_{3})$ is equal to $-e_{1}$ or $-e_{2}$.
By (1), the equality $M_{F}(e_{3})=-e_{2}$ implies that $M_{F}(e_{2})=-e_{3}$ which is impossible.
So, we have $M_{F}(e_{3})=-e_{1}$.
Then $M_{F}(e_{1})=-e_{3}$. The latter means that $e_{2}=-e_{3}$
which contradicts $M_{F}(e_{2})=e_{3}$ by (3).
\end{proof}

Using Lemma \ref{l-1}, we show that $M_{F}$ is one of the permutations (M1)--(M7).
Suppose that $M_{F}$ is not identity.

Consider the case when $M_{F}$ contains a $3$-cycle $C$.
By (3), this cycle does not contain pairs of type $e,-e$.
This implies the existence of $e_{1},e_{2},e_{3}\in \Omega(F)$ such that $(e_{1},e_{2},e_{3})$ is a cycle in $D_{F}$ or $(D_{F})^{-1}$
and 
$$C=(e_{1},e_{2},e_{3})\;\mbox{ or }\;C=(-e_{1},e_{2},e_{3}).$$
Then (1) shows that 
$$M_{F}=(e_{1},e_{2},e_{3})(-e_{3},-e_{2},-e_{1})\;\mbox{ or }\;M_{F}=(-e_{1},e_{2},e_{3})(-e_{3},-e_{2},e_{1}).$$
We get the permutation (M2) or (M5) in the first case and (M3) or (M6) in the second.

If there is no $3$-cycle in $M_{F}$, then it contains a transposition $T$.
By (3), this transposition is not of type $(e,-e)$.
Then there exist $e_{1},e_{2},e_{3}\in \Omega(F)$ such that $(e_{1},e_{2},e_{3})$ is one of the cycles in $D_{F}$  and
$$T=(e_{1},e_{2})\;\mbox{ or }\;T=(e_{1},-e_{2}).$$
It follows from (1)  that $M_{F}$ contains also the transposition $(-e_{1},-e_{2})$ or the transposition $(e_{2},-e_{1})$, respectively.
Then (3) implies that $M_{F}$ leaves fixed $e_{3}$ and $-e_{3}$.
So, we get (M4) or (M7).

\begin{lemma}\label{l-2}
The triangulation $\Gamma$ is locally $z$-knotted for $F$ if and only if $D_{F}M_{F}$ is the composition of two distinct commuting $3$-cycles.
\end{lemma}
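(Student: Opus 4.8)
The plan is to connect the cycle structure of the permutation $D_{F}M_{F}$ directly to the zigzags in $\mathcal{Z}(F)$, and then to feed in Lemma \ref{l-zk-loc}. The central observation I would establish first is that the cycles of $D_{F}M_{F}$ are in natural bijection with the zigzags in $\mathcal{Z}(F)$. For $e\in\Omega(F)$ set $e_{0}=(D_{F})^{-1}(e)$; the sequence $e_{0},e$ determines a unique zigzag $Z_{e}\in\mathcal{Z}(F)$, and every element of $\Omega(F)$ occurs this way, namely as the second edge of the pair $e_{0},e$. Tracing $Z_{e}$ forward from $e$, when the zigzag next returns to $F$ it does so along two consecutive edges $f_{0},f$ of $F$ with $D_{F}(f_{0})=f$; by (Z2) the first element of $\Omega(F)$ met after $e$ must be $f_{0}$, so $M_{F}(e)=f_{0}$ and hence $D_{F}M_{F}(e)=D_{F}(f_{0})=f$. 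Thus $D_{F}M_{F}$ carries $e$ to the second edge of the next visit of $Z_{e}$ to $F$, so $Z_{D_{F}M_{F}(e)}=Z_{e}$; iterating, the orbit of $e$ is exactly the set of second edges of all visits of $Z_{e}$ to $F$. Distinct zigzags give disjoint orbits and these exhaust $\Omega(F)$, so the number of cycles of $D_{F}M_{F}$ equals $|\mathcal{Z}(F)|$.

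For the forward implication, suppose $\Gamma$ is locally $z$-knotted for $F$, so $\mathcal{Z}(F)=\{Z,Z^{-1}\}$ and, by Lemma \ref{l-zk-loc}, each of $Z,Z^{-1}$ passes through every edge of $F$ twice. Each traversal of an edge of $F$ belongs to exactly one pair of consecutive $F$-edges: the two neighbouring pairs at that edge lie on its two distinct faces by (Z2), and exactly one of those faces is $F$. Hence the number of visits a zigzag makes to $F$ is half its number of $F$-edge traversals, so a zigzag meeting all three edges of $F$ twice makes exactly $6/2=3$ visits and therefore contributes a $3$-cycle of $D_{F}M_{F}$. Applying this to $Z$ and to $Z^{-1}$, and using that a zigzag is never its own reverse (as noted after the definition of zigzags) so that the two orbits are disjoint, I conclude that $D_{F}M_{F}$ is the product of two distinct commuting $3$-cycles.

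Conversely, if $D_{F}M_{F}$ is a product of two distinct commuting $3$-cycles, then it has exactly two cycles, so by the bijection above $|\mathcal{Z}(F)|=2$ and $\Gamma$ is locally $z$-knotted for $F$.

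The step I expect to be the main obstacle is the first one: making fully rigorous that each return of a zigzag to $F$ begins with the first edge $f_{0}$ of an $F$-pair and that the $F$-pairs partition the $F$-edge traversals, so that the orbit count is exact and forces the split into two $3$-cycles (rather than, say, a $2$-cycle and a $4$-cycle) once local $z$-knottedness is assumed. Everything after that is bookkeeping. As an independent check, one can bypass the correspondence and argue from Theorem \ref{th-1}: since $M_{F}$ is one of (M1)--(M7), a direct computation of $D_{F}M_{F}$ in each case shows it is a product of two disjoint $3$-cycles precisely in the cases (M1)--(M4), which are exactly the locally $z$-knotted ones.
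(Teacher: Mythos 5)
Your main argument is correct, and while it rests on the same structural fact as the paper's proof, it closes the two implications by different means. Both proofs exploit the observation that each return of a zigzag to $F$ consists of a consecutive pair $f_{0},D_{F}(f_{0})$, so that $D_{F}M_{F}$ advances ``second edges of visits'' to the next visit and zigzags of ${\mathcal Z}(F)$ correspond to orbits. The paper, however, never invokes Lemma \ref{l-zk-loc}: fixing one zigzag $Z$, it considers the orbit ${\mathcal X}$ of $e$ together with the mirror set ${\mathcal Y}$ arising from $Z^{-1}$, shows ${\mathcal X}\cap{\mathcal Y}=\emptyset$ using that a zigzag is never self-reversed, obtains $m\le 3$ by pigeonhole in the $6$-element set $\Omega(F)$, and then decides both implications by whether ${\mathcal X}\cup{\mathcal Y}$ covers $\Omega(F)$. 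You instead make the bijection between cycles of $D_{F}M_{F}$ and elements of ${\mathcal Z}(F)$ explicit --- which immediately gives the converse implication --- and for the forward implication you import Lemma \ref{l-zk-loc} and count: six traversals of edges of $F$ pair up into three visits, so each of the two orbits is a $3$-cycle. This is legitimate and arguably more transparent; its cost is the dependence on Lemma \ref{l-zk-loc}, while the paper's version stays self-contained and exhibits the explicit mirror relation between the cycles of $Z$ and $Z^{-1}$, which is reused later (proof of Lemma \ref{l-3}). One step of yours needs tightening: ``three visits, therefore a $3$-cycle'' requires that the three visits have three \emph{distinct} second edges. The cheapest repair uses facts you already established: the two orbits are disjoint, they exhaust the six elements of $\Omega(F)$, and each has size at most the number of visits, i.e.\ at most $3$; hence each has size exactly $3$.

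A separate point: your closing ``independent check'' is circular and should be dropped. The classification of $M_{F}$ into (M1)--(M7) uses only Lemma \ref{l-1} and is indeed independent of Lemma \ref{l-2}, and the computation that $D_{F}M_{F}$ is a pair of disjoint $3$-cycles precisely in cases (M1)--(M4) is also independent; but the assertion that (M1)--(M4) ``are exactly the locally $z$-knotted ones'' is precisely the half of Theorem \ref{th-1} that the paper deduces \emph{from} Lemma \ref{l-2}. Citing it as a verification of Lemma \ref{l-2} assumes what is being proved.
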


\begin{proof}
Let $e,e_{0}\in \Omega(F)$ and $D_{F}(e_{0})=e$.
Consider the zigzag $Z$ containing the sequence $e_{0},e$.
If $e',e'_{0}\in \Omega(F)$, $D_{F}(e'_{0})=e'$ and $Z$ contains the sequence $e'_{0},e'$,
then we write $[e',M_{F}(e')]$ for the part of $Z$ between $e'$ and $M_{F}(e')$.
The zigzag $Z$ is the cyclic sequence 
$$[e,M_{F}(e)],[D_{F}M_{F}(e),M_{F}D_{F}M_{F}(e)],\dots,[(D_{F}M_{F})^{m-1}(e),M_{F}(D_{F}M_{F})^{m-1}(e)],$$ 
where $m$ is the smallest non-zero number satisfying $(D_{F}M_{F})^{m}(e)=e$. 
Since
$$e, D_{F}M_{F}(e),\dots, (D_{F}M_{F})^{m-1}(e)$$
are mutually distinct, the same holds for
$$M_{F}(e),M_{F}D_{F}M_{F}(e),\dots, M_{F}(D_{F}M_{F})^{m-1}(e).$$
Consider  the sets 
$${\mathcal X}=\{e, D_{F}M_{F}(e),\dots, (D_{F}M_{F})^{m-1}(e)\}$$
and 
$${\mathcal Y}=\{-M_{F}(e), -M_{F}D_{F}M_{F}(e),\dots, -M_{F}(D_{F}M_{F})^{m-1}(e)\}.$$
If $e'$ belongs to ${\mathcal X}\cap {\mathcal Y}$ and $D_{F}(e'')=e'$, then $D_{F}(-e')=-e''$ and
$Z$ is a cyclic sequence of type 
$$\dots,[*,e''],[e',*],\dots,[*,-e'],[-e'',*],\dots,$$
in other word, the zigzag $Z$ is self-reversed which is impossible.
So, ${\mathcal X}\cap {\mathcal Y}=\emptyset$.
This implies that $m\le 3$ (indeed, if $m>3$, then ${\mathcal X}$ and ${\mathcal Y}$ both contain more that three elements and 
have a non-empty intersection).
Our zigzag 
$$Z=[e_{1},M_{F}(e_{1})],\dots,[e_{m},M_{F}(e_{m})],\;\;\;\;\;e_{i}=(D_{F}M_{F})^{i-1}(e)$$
corresponds to the $m$-cycle $C=(e_{1},\dots, e_{m})$ in the permutation $D_{F}M_{F}$
and the reversed zigzag 
$$Z^{-1}=[-M_{F}(e_{m}),-e_{m}],\dots,[-M_{F}(e_{1}),-e_{1}]$$
corresponds to the $m$-cycle $$C'=(-M_{F}(e_{m}),\dots,-M_{F}(e_{1}))$$
(if $m=1$, then $e_{1}$ and $-M_{F}(e_{1})$ both are fixed points of $D_{F}M_{F}$).

Suppose that $m=3$. Then $D_{F}M_{F}$ is the composition of the commuting $3$-cycles $C$ and $C'$.
Also, we have $\Omega(F)={\mathcal X}\cup {\mathcal Y}$ and every element of $\Omega(F)$ belongs to ${\mathcal X}$ or ${\mathcal Y}$.
This means that ${\mathcal Z}(F)$ consists of $Z$ and $Z^{-1}$.

If $m<3$, then there are elements of $\Omega(F)$ which do not belong to ${\mathcal X}\cup {\mathcal Y}$.
Such elements define zigzags distinct from $Z$ and $Z^{-1}$.
Observe that $D_{F}M_{F}$ does not contain $3$-cycles in this case.
\end{proof}

A direct verification shows that $D_{F}M_{F}$ is the composition of two distinct commuting $3$-cycles
if and only if $M_{F}$ is one of (M1)--(M4). Theorem \ref{th-1} is proved.

\begin{theorem}\label{th-2}
The triangulation $\Gamma$ is $z$-knotted if and only if for every face $F$ the $z$-monodromy $M_{F}$ is one of {\rm(M1)--(M4)}.
\end{theorem}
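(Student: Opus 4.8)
The plan is to read the statement as a local-to-global principle. By Theorem \ref{th-1}, the condition that $M_{F}$ is one of (M1)--(M4) is equivalent to $\Gamma$ being locally $z$-knotted for $F$, i.e. $|\mathcal{Z}(F)|=2$. So I will prove the equivalent assertion: $\Gamma$ is $z$-knotted if and only if it is locally $z$-knotted for every face $F$. The forward direction is immediate, and the reverse direction is where the work lies.

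For the ``only if'' direction, suppose $\Gamma$ is $z$-knotted, so there is a single pair of zigzags $Z,Z^{-1}$. For any face $F$ the set $\mathcal{Z}(F)$ is contained in this one pair, while Lemma \ref{l-0}(3) gives $|\mathcal{Z}(F)|\ge 2$. Hence $\mathcal{Z}(F)=\{Z,Z^{-1}\}$ and $|\mathcal{Z}(F)|=2$, so $\Gamma$ is locally $z$-knotted for $F$.

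For the ``if'' direction the key step is a propagation observation: if $F$ and $F'$ are adjacent faces sharing an edge $e$ and $\Gamma$ is locally $z$-knotted for both, then $\mathcal{Z}(F)=\mathcal{Z}(F')$. Indeed, there is a zigzag passing through $e$ (for instance one of the zigzags $Z_{i}(e)$ from the proof of Lemma \ref{l-zk}), and by Lemma \ref{l-0}(1) this zigzag lies in both $\mathcal{Z}(F)$ and $\mathcal{Z}(F')$. By local $z$-knottedness together with Lemma \ref{l-0}(2), each of these sets has the form $\{W,W^{-1}\}$ for a single zigzag $W$; two such pairs with non-empty intersection must coincide, since a zigzag determines its reverse. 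Thus $\mathcal{Z}(F)=\mathcal{Z}(F')$.

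Finally I would invoke connectivity. Consider the dual graph on the set of faces, where two faces are joined when they share an edge. Because the embedding is a closed $2$-cell embedding of a connected graph in the connected surface $M$, this dual graph is connected: the interiors of any two faces can be linked by a path in $M$ which, after a small perturbation avoiding vertices, crosses a finite sequence of edges and so passes through a chain of pairwise adjacent faces. Applying the propagation observation along such a chain shows that $\mathcal{Z}(F)$ equals one fixed pair $\{Z,Z^{-1}\}$ for \emph{every} face $F$. Since every zigzag contains at least one edge, and that edge lies in some face $F$, Lemma \ref{l-0}(1) forces every zigzag to belong to $\mathcal{Z}(F)=\{Z,Z^{-1}\}$; hence there is a single pair of zigzags and $\Gamma$ is $z$-knotted. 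The main obstacle is exactly this connectivity input: I must ensure that local $z$-knottedness genuinely propagates over the whole triangulation, which rests on the dual graph being connected, and this is where the topological hypotheses (connected surface, closed $2$-cell embedding) are used; everything else is bookkeeping with Lemma \ref{l-0} and the fact that intersecting zigzag-pairs coincide.
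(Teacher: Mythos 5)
Your proposal is correct and follows essentially the same route as the paper: both directions reduce to local $z$-knottedness via Theorem \ref{th-1}, and the converse is proved by propagating the pair $\{Z,Z^{-1}\}$ across adjacent faces using Lemma \ref{l-0}(1) and then invoking connectedness. The only cosmetic difference is that you propagate via an arbitrary zigzag through the shared edge while the paper uses the fact that $Z$ itself passes through every edge of $F$, and you spell out the dual-graph connectivity and the final step (every zigzag lies in some $\mathcal{Z}(F)$) that the paper leaves implicit.
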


\begin{proof}
If $\Gamma$ is $z$-knotted, then it is locally $z$-knotted for every face $F$ and 
each $z$-monodromy $M_{F}$ is one of (M1)--(M4) by Theorem \ref{th-1}.

Now, we suppose that  for every face $F$ the $z$-monodromy $M_{F}$ is one of  (M1)--(M4).
It follows from Theorem \ref{th-1} that $\Gamma$ is locally $z$-knotted for all faces. 
Let $F$ be a face of $\Gamma$ and let ${\mathcal Z}(F)=\{Z,Z^{-1}\}$. 
Then $Z$ passes through each edge of $F$.
Therefore, if $F'$ is a face adjacent to $F$ (i.e. intersecting $F$ in an edge),
then $Z$ belongs to ${\mathcal Z}(F')$ by the statement (1) from Lemma \ref{l-0}.
Since $\Gamma$ is locally $z$-knotted for $F'$, we have ${\mathcal Z}(F')=\{Z,Z^{-1}\}$.
The same holds for every face $F'$ of $\Gamma$ by connectedness.
\end{proof}

\begin{rem}\label{rem1}{\rm
Suppose that the $z$-monodromy of a face $F$ is (M1) or (M2) and $Z\in {\mathcal Z}(F)$.
Then $Z$ passes through each edge of $F$ twice in the same direction,
in other words, it goes through three elements of $\Omega(F)$ twice. 
These elements form a cycle in $D_{F}$, see Fig.7(a).
\begin{center}
\begin{tikzpicture}[scale=0.6]

\draw[fill=black] (0,2) circle (3pt);
\draw[fill=black] (-1.7320508076,-1) circle (3pt);
\draw[fill=black] (1.7320508076,-1) circle (3pt);

\draw [thick, decoration={markings,
mark=at position 0.62 with {\arrow[scale=1.5,>=stealth]{>>}}},
postaction={decorate}] (0,2) -- (-1.7320508076,-1);

\draw [thick, decoration={markings,
mark=at position 0.62 with {\arrow[scale=1.5,>=stealth]{>>}}},
postaction={decorate}] (-1.7320508076,-1) -- (1.7320508076,-1);

\draw [thick, decoration={markings,
mark=at position 0.62 with {\arrow[scale=1.5,>=stealth]{<<}}},
postaction={decorate}] (0,2) -- (1.7320508076,-1);

\node at (0,-1.65) {(a)};

\draw[fill=black] (5.1961524228,2) circle (3pt);
\draw[fill=black] (3.4641016152,-1) circle (3pt);
\draw[fill=black] (6.9282032304,-1) circle (3pt);

\draw [thick, decoration={markings,
mark=at position 0.62 with {\arrow[scale=1.5,>=stealth]{><}}},
postaction={decorate}] (5.1961524228,2) -- (3.4641016152,-1);

\draw [thick, decoration={markings,
mark=at position 0.62 with {\arrow[scale=1.5,>=stealth]{>>}}},
postaction={decorate}] (3.4641016152,-1) -- (6.9282032304,-1);

\draw [thick, decoration={markings,
mark=at position 0.62 with {\arrow[scale=1.5,>=stealth]{><}}},
postaction={decorate}] (5.1961524228,2) -- (6.9282032304,-1);

\node at (5.1961524228,-1.65) {(b)};
\end{tikzpicture}
\captionof{figure}{ }
\end{center}
In the case when the $z$-monodromy $M_{F}$ is (M3) or (M4),
every zigzag from ${\mathcal Z}(F)$ goes through one edge twice in the same direction and through the remaining two edges twice in opposite directions, 
see Fig.7(b).
}\end{rem}

\begin{rem}\label{rem2}{\rm
There is a one-to-one correspondence between cycles of the permutation $D_{F}M_{F}$ and zigzags belonging to ${\mathcal Z}(F)$
(see the proof of Lemma \ref{l-2}). 
An easy verification shows that $|{\mathcal Z}(F)|=6$ if $M_{F}$ is (M5) and we have $|{\mathcal Z}(F)|=4$ if $M_{F}$ is (M6) or (M7). 
}\end{rem}

\section{Examples}
We describe zigzags in bipyramids and their connected sums (note that all zigzags will be presented as sequences of vertices)
and give an example for each type of $z$-monodromy.  
In particular, we show that each of the $z$-monodromies (M1)--(M4) is realized in a $z$-knotted triangulation of the sphere ${\mathbb S}^{2}$.

\subsection{$Z$-monodromy in bipyramids}
Consider the $n$-gonal bipyramid $BP_n$, $n\ge 3$ 
containing an $n$-gone whose vertices  are denoted by $1,\dots,n$ and connected with two disjoint vertices $a, b$
(see Fig.8 for $n=3$).
We describe the $z$-monodromy $M_{F}$ for the face $F$ containing the vertices $a,1,2$.
For other faces the $z$-monodromy is the same by the symmetry.
\begin{center}
\begin{tikzpicture}[scale=0.3]


\coordinate (A) at (0.5,4);
\coordinate (B) at (0.5,-5);
\coordinate (1) at (0,-1);
\coordinate (3) at (-2,0);
\coordinate (2) at (3,0);

\draw[fill=black] (A) circle (3.5pt);
\draw[fill=black] (B) circle (3.5pt);

\draw[fill=black] (1) circle (3.5pt);
\draw[fill=black] (3) circle (3.5pt);
\draw[fill=black] (2) circle (3.5pt);

\draw[thick] (2)--(1)--(3);
\draw[thick, dashed] (3)--(2);

\draw[thick] (A)--(1);
\draw[thick] (A)--(2);
\draw[thick] (A)--(3);

\draw[thick] (B)--(1);
\draw[thick] (B)--(2);
\draw[thick] (B)--(3);

\node at (0.5,4.6) {$a$};
\node at (0.5,-5.8) {$b$};

\node at (0.5,-1.5) {$1$};
\node at (-2.6,0) {$3$};
\node at (3.6,0) {$2$};

\end{tikzpicture}
\captionof{figure}{ }
\end{center}
\begin{exmp}[$z$-monodromy of type (M3)]\label{m3}\rm{
Suppose that $n=2k+1$ and $k$ is odd. 
If $k=1$, then one of the zigzags is
$$a,1,2,b,3,1,a,2,3,b,1,2,a,3,1,b,2,3.$$
For $k\ge 3$ this zigzag is
$$a,1,2,b,3,4,\dots,a,n-2,n-1,b,n,1,a,2,3,b,\dots,a,n-1,n,$$
$$b,1,2,a,3,4,\dots,b,n-2,n-1,a,n,1,b,2,3,a,\dots,b,n-1,n.$$
The zigzag passes through every edge twice and, by Lemma \ref{l-zk},
the bipyramid is $z$-knot\-ted.
The face $F$ appears in the zigzag as follows
$$a,1,2,\dots,1,a,2,\dots,1,2,a,\dots$$
which determines $M_{F}$ for three elements of $\Omega(F)$
$$12\rightarrow 1a,\;\; a2\rightarrow 12,\;\; 2a\rightarrow a1.$$
By the statement (1) from Lemma \ref{l-1}, we have
$$a1\rightarrow 21,\;\; 21\rightarrow 2a,\;\; 1a\rightarrow a2.$$
Let $e_{1}=12$, $e_{2}=2a$, $e_{3}=a1$. 
Then $(e_{1},e_{2},e_{3})$ is one of the $3$-cycles in $D_{F}$. 
A simple verification shows that
$$M_{F}=(-e_{1},e_{2},e_{3})(-e_{3},-e_{2},e_{1})$$
is of type (M3).
}\end{exmp}

\begin{exmp}[$z$-monodromy of type (M4)]\label{m4}\rm{
Suppose that $n=2k+1$ and $k$ is even. 
If $k=2$, then  one of the zigzags is
$$a,1,2,b,3,4,a,5,1,b,2,3,a,4,5,
b,1,2,a,3,4,b,5,1,a,2,3,b,4,5.$$
For $k\ge 4$ we have
$$a,1,2,b,3,4,a,\dots,b,n-2,n-1,a,n,1,b,2,3,a,\dots,a,n-1,n,$$
$$b,1,2,a,3,4,b,\dots,a,n-2,n-1,b,n,1,a,2,3,b,\dots,b,n-1,n.$$
As in the previous example, this zigzag passes through every edge twice and the bipyramid is $z$-knotted.
Let $e_{1}=2a$, $e_{2}=a1$, $e_{3}=12$.
Then $(e_{1},e_{2},e_{3})$ is one of the $3$-cycles in $D_{F}$.
The face $F$ appears in the zigzag as follows
$$a,1,2,\dots,1,2,a,\dots,1,a,2,\dots$$
which implies that $M_{F}$ leaves fixed $e_{3}$ and transfers $e_{1}$ to $-e_{2}$ and $-e_{1}$ to $e_{2}$.
Using the statement (1) from Lemma \ref{l-1}, we establish that
$$M_{F}=(e_{1},-e_{2})(e_{2},-e_{1})$$
is of type (M4).
}\end{exmp}

\begin{exmp}[$z$-monodromy of type (M5)]\label{m5}\rm{
Suppose that $n=2k$ and $k$ is even. 
In the case when $k=2$, we get  the octahedron whose zigzags are simple.
Assume that $k\ge 4$.
The set ${\mathcal Z}(F)$ contains precisely $8$ zigzags:
\begin{enumerate}
\item[$\bullet$] $a,1,2,b,3,4,\dots,b,n-1,n$
\item[$\bullet$] $b,1,2,a,3,4,\dots,a,n-1,n$
\item[$\bullet$] $1,a,2,3,b,\dots,a,n-2,n-1,b,n$
\item[$\bullet$] $1,b,2,3,a,\dots,b,n-2,n-1,a,n$
\end{enumerate}
and their reverses. These zigzags are not simple, but they are edge-simple.
By Example \ref{exmp-t}, the $z$-monodromy of every face is of type (M5).
}\end{exmp}

\begin{exmp}[$z$-monodromy of type (M7)]\label{m7}\rm{
Suppose that $n=2k$ and $k$ is an odd number greater than $1$.
Then ${\mathcal Z}(F)$ is formed by the following two zigzags 
$$a,1,2,b,3,4,\dots,a,n-1,n,b,1,2,a,3,4,\dots,b,n-1,n$$
and
$$1,a,2,3,\dots,b,n-2,n-1,a,n,1,b,2,3,\dots,a,n-2,n-1,b,n$$
and their reverses.
As in Example \ref{m4}, we take $e_{1}=2a$, $e_{2}=a1$, $e_{3}=12$.
Then $(e_{1},e_{2},e_{3})$ is one of the $3$-cycles in $D_{F}$.
The face $F$ appears in the zigzags as follows
$$a,1,2,\dots,1,2,a,\dots\;\mbox{ and }\;1,a,2,\dots$$
which implies that $M_{F}$ leaves fixed $e_{3}$ and transfers $e_{1}$ to $e_{2}$ and $-e_{1}$ to $-e_{2}$.
By the statement (1) from Lemma \ref{l-1}, 
$$M_{F}=(e_{1},e_{2})(-e_{1},-e_{2})$$
is of type (M7).
}\end{exmp}

\subsection{$Z$-monodromy in the connected sums of bipyramids}
Let $BP_{n}$ be as in the previous subsection. 
Consider another one bipyramid $BP_{n'}$, where the vertices of the $n'$-gone are denoted by $1',\dots,n'$ and 
we write $a',b'$ for the remaining two vertices.
Let $S$ and $S'$ be the faces of the bipyramids containing the vertices $a,1,2$ and $a',1',2'$, respectively. 
We describe the $z$-monodromies  for some faces in the connected sums $BP_{n} \#_{g} BP_{n'}$,
where $g:\partial S\to \partial S'$ is a special homeomorphism.
One of these sums for $n=n'=3$ is presented on Fig.9.
\begin{center}
\begin{tikzpicture}[scale=1]


\draw[fill=black] (0,1.5) circle (2pt); 
\draw[fill=black] (0,-1) circle (2pt); 
\draw[fill=black] (-0.5,0) circle (2pt); 

\draw[fill=black] (-2,0.5) circle (2pt); 
\draw[fill=black] (1.5,0.5) circle (2pt); 
\draw[fill=black] (-2,-1.5) circle (2pt); 
\draw[fill=black] (1.5,-1.5) circle (2pt); 

\coordinate (A) at (0,1.5);
\coordinate (B) at (0,-1);
\coordinate (C) at (-0.5,0);
\coordinate (L) at (-2,0.5);
\coordinate (P) at (1.5,0.5);
\coordinate (LD) at (-2,-1.5);
\coordinate (PD) at (1.5,-1.5);

\draw[thick, dashed, line width=1.75pt] (A)--(B);
\draw[thick, line width=1.75pt] (A)--(C); 
\draw[thick, line width=1.75pt] (B)--(C);

\draw[thick] (L)--(A);
\draw[thick, dashed] (L)--(B);
\draw[thick] (L)--(C);

\draw[thick] (P)--(A);
\draw[thick, dashed] (P)--(B);
\draw[thick] (P)--(C);

\draw[thick] (LD)--(B);
\draw[thick] (LD)--(C);
\draw[thick] (LD)--(L);

\draw[thick] (PD)--(B);
\draw[thick] (PD)--(C);
\draw[thick] (PD)--(P);

\end{tikzpicture}
\captionof{figure}{}
\end{center}

\begin{exmp}[$z$-monodromy of type (M2)]\label{m2}\rm{
Suppose that $n=2k+1$ and $n'=2k'+1$, where $k$ and $k'$ are odd.
Let $\Gamma$ be the connected sum $BP_{n} \#_{g} BP_{n'}$, where $g:\partial S\to \partial S'$ satisfies 
$$g(a)=a',\;\;g(1)=1',\;\;g(2)=2'$$
(see Fig.9 for $n=n'=3$). 
The zigzag of $BP_{n}$ considered in Example \ref{m3} can be presented as the cyclic sequence $A,B,C$,
where
$$A=\{1, 2, b, \dots, 1, a\},\;\;B=\{a, 2, \dots, b, 1, 2\},\;\;C=\{2, a,\dots, 1, b, 2, \dots, a, 1\}$$
are parts of the zigzag between two edges of the face $S$.
Note that any two consecutive parts have the same vertex
(for example, the parts $A$ and $B$ are joined in the vertex $a$).
Similarly, one of the two zigzags of $BP_{n'}$ is the cyclic sequence $A',B',C'$, where
$$A'=\{1', 2', \dots, 1', a'\},\;\;B'=\{a', 2', \dots, 1', 2'\},\;\;C'=\{2', a',\dots, a', 1'\}.$$
Consider the cyclic sequence
$$A, C'^{-1}, B, A', C^{-1}, B',$$
where for any two consecutive parts $X,Y$ the last edge from $X$ is identified with the first edge from $Y$.
A direct verification shows that this is a zigzag in $\Gamma$.
This zigzag passes through each edge of $\Gamma$ twice
(since it is obtained from a zigzag of $BP_{n}$ passing  through all edges of $BP_{n}$ twice 
and a zigzag of $BP_{n'}$ satisfying the same condition).
Lemma \ref{l-zk} implies that $\Gamma$ is $z$-knotted.
Let $F$ be the face of $\Gamma$ containing the vertices $b, 1, 2$
and let $e_{1}=12$, $e_{2}=2b$, $e_{3}=b1$.
Since this face appears in the zigzag as follows
$$1, 2, b, \dots, b, 1, 2, \dots, 2, b, 1, \dots,$$
the $z$-monodromy $M_{F}$ contains the $3$-cycle $(e_{1},e_{2},e_{3})$.
By the statement (1) from Lemma \ref{l-1}, 
$$M_{F}=(e_{1},e_{2},e_{3})(-e_{3},-e_{2},-e_{1})=D_{F}$$
is of type (M2).
}\end{exmp}

\begin{exmp}[$z$-monodromy of type (M1)]\label{m1}\rm{
Suppose that $n=2k$ and $n'=2k'$, where $k$ and $k'$ are odd number greater than $1$.
Denote by $\Gamma$ the connected sum $BP_{n} \#_{g} BP_{n'}$, where $g:\partial S\to \partial S'$ satisfies
$$g(a)=2',\;\;g(1)=a',\;\;g(2)=1'.$$ 
By Example \ref{m7},  the set ${\mathcal Z}(S)$ is formed by the zigzags 
$$a,1,2,b,3,4,\dots,a,n-1,n,b,1,2,a,3,4,\dots,b,n-1,n$$
and
$$1,a,2,3,\dots,b,n-2,n-1,a,n,1,b,2,3,\dots,a,n-2,n-1,b,n$$
and their reverses.
The first zigzag is the cyclic sequence $A,B$, where 
$$A=\{1, 2, \dots, 1, 2\}\;\mbox{ and }\;B=\{2, a, 3, \dots, n,a, 1\}$$
are parts of the zigzag between two edges of the face $S$
($A$ is joined with $B$ in the vertex $2$ and $B$ is joined with $A$ in the vertex $1$).
The second zigzag passes once through the edges $a1$ and $a2$, and it does not contain the edge $12$.
We rewrite this zigzag as 
$$C=\{a, 2, 3,\dots, 2, 3, a, \dots, n,1, a\}.$$
Similarly, ${\mathcal Z}(S')$ is formed by the zigzags $A',B'$ and $C'$, where 
$$A'=\{1', 2', \dots, 1', 2'\},\;\;B'=\{2', a', \dots,a', 1'\},\;\;C'=\{a', 2',\dots,1', a'\},$$
and their reverses.
Then
$$A, C'^{-1}, C^{-1}, A', B, B'$$
(as in the previous example, for any two consecutive parts $X,Y$ the last edge from $X$ is identified with the first edge from $Y$)
is a zigzag in $\Gamma$.
This zigzag passes through all edges of $\Gamma$ twice
(indeed, the sequence $A,B,C$ contains every edge of $BP_{n}$ twice and $A',B',C'$ contains every edge of $BP_{n'}$ twice). 
Therefore, $\Gamma$ is $z$-knotted.
Let $F$ be the face of $\Gamma$ containing the  vertices $a, 2, 3$.
This face appears in the zigzag as follows
$$\dots, a, 3, 2, \dots, 3, 2, a, \dots, 2, a, 3, \dots$$
which implies that $M_{F}$ is identity.
}\end{exmp}

\begin{exmp}[$z$-monodromy of type (M6)]\label{m6}\rm{
As in Example \ref{m2}, we suppose that $n=2k+1$ and $n'=2k'+1$, where $k$ and $k'$ are odd.
Consider the connected sum $BP_{n} \#_{g} BP_{n'}$, where $g:\partial S\to \partial S'$ satisfies 
$$g(a)=1',\;\;g(1)=2',\;\;g(2)=a'$$
(see Fig.10 for $n=n'=3$).
Note that this graph is not isomorphic to the graph from Example \ref{m2}.
\begin{center}
\begin{tikzpicture}[scale=1]


\draw[fill=black] (0,1.5) circle (2pt); 
\draw[fill=black] (0,-1) circle (2pt); 
\draw[fill=black] (0.5,0.1) circle (2pt); 

\draw[fill=black] (-1.5,0.5) circle (2pt); 
\draw[fill=black] (2,0.5) circle (2pt); 
\draw[fill=black] (-1.25,-1.25) circle (2pt); 
\draw[fill=black] (1.5,2) circle (2pt); 

\coordinate (A) at (0,1.5);
\coordinate (B) at (0,-1);
\coordinate (C) at (0.5,0.1);
\coordinate (L) at (-1.5,0.5);
\coordinate (P) at (2,0.5);
\coordinate (LD) at (-1.25,-1.25);
\coordinate (PG) at (1.5,2);

\draw[thick, dashed, line width=1.75pt] (A)--(B);
\draw[thick, line width=1.75pt] (A)--(C); 
\draw[thick, line width=1.75pt] (B)--(C);

\draw[thick] (L)--(A);
\draw[thick, dashed] (L)--(B);
\draw[thick] (L)--(C);

\draw[thick, dashed] (P)--(A);
\draw[thick] (P)--(B);
\draw[thick] (P)--(C);

\draw[thick] (LD)--(B);
\draw[thick] (LD)--(C);
\draw[thick] (LD)--(L);

\draw[thick] (PG)--(A);
\draw[thick] (PG)--(C);
\draw[thick] (PG)--(P);

\end{tikzpicture}
\captionof{figure}{}
\end{center}
Recall that the single zigzags (up to reversing) in $BP_{n}$ and $BP_{n'}$ are the cyclic sequences $A,B,C$ and $A',B',C'$ (respectively), 
where
$$A=\{1, 2, b, \dots, 1, a\},\;\;B=\{a, 2, \dots, b, 1, 2\},\;\;C=\{2, a,\dots, 1, b, 2, \dots, a, 1\},$$
$$A'=\{1', 2', \dots, 1', a'\},\;\;B'=\{a', 2', \dots,1', 2',\},\;\;C'=\{2', a',\dots, a', 1'\}.$$
The cyclic sequences
$$A, B'^{-1}\;\text{ and }\; B, C', C, A'$$
define zigzags in $BP_{n} \#_{g} BP_{n'}$ (the corresponding edges in  consecutive parts are identified).
Let $F$ be the face of the connected sum which contains the vertices $b, 1, 2$
and let $e_{1}=b1$, $e_{2}=2b$, $e_{3}=12$.
Then $(e_{1},e_{2},e_{3})$ is one of the $3$-cycles in $(D_{F})^{-1}$.
The face $F$ appears in the zigzags as follows
$$1, 2, b, \dots\;\mbox{ and }\;\dots, b, 1, 2, \dots, 1, b, 2, \dots$$
which determines $M_{F}$ on three elements of $\Omega(F)$ 
$$e_{2}\rightarrow e_{3},\;\; e_{3}\rightarrow -e_{1},\;\; -e_{2}\rightarrow e_{1}.$$
The statement (1) from Lemma \ref{l-1} implies that
$$-e_{3}\rightarrow -e_{2},\;\; e_{1}\rightarrow -e_{3},\;\; -e_{1}\rightarrow e_{2}.$$
Therefore, 
$$M_{F}=(-e_{1},e_{2},e_{3})(-e_{3},-e_{2},e_{1})$$
is of type (M6).
}\end{exmp}

\section{Gluing Lemma}
Let $\Gamma$ and $\Gamma'$ be triangulations.
Let also $F$ and $F'$ be faces in $\Gamma$ and $\Gamma'$, respectively. 
Every special homeomorphism $g: \partial F \to \partial F'$  induces a bijection between $\Omega(F)$ and $\Omega(F')$ 
which also will be denoted by $g$ and sends every oriented edge $xy$ to the oriented edge $g(x)g(y)$.
The following properties of the bijection $g:\Omega(F)\to \Omega(F')$ are obvious: 
\begin{enumerate}
\item[$\bullet$] $g(-e)=-g(e)$ for every $e\in \Omega(F)$,
\item[$\bullet$] $gD_{F}g^{-1}=D_{F'}$.
\end{enumerate}

A face $S$ in a triangulation is said to be {\it essential} if every zigzag of this triangu\-lation contains an edge from this face,
or equivalently, every zigzag belongs to ${\mathcal Z}(S)$.
It is clear that all faces in $z$-knotted triangulations are essential.
Also, every face in a tetrahedron is essential.

\begin{lemma}\label{l-3}
The following assertions are fulfilled:
\begin{enumerate}
\item[(1)]
Suppose that $F$ and $F'$ are essential faces.
Then the  connected sum $\Gamma \#_{g} \Gamma'$ is $z$-knotted if and only if 
$gM_{F}g^{-1}M_{F'}$ is the composition of two distinct commuting $3$-cycles.
\item[(2)]
Suppose that $\Gamma'$ is $z$-knotted and $gM_{F}g^{-1}M_{F'}$ is the composition of two distinct commuting $3$-cycles.
Then $\Gamma \#_{g} \Gamma'$ contains a zigzag $Z$ such that 
${\mathcal Z}(S)=\{Z,Z^{-1}\}$
for every face $S$ in $\Gamma \#_{g} \Gamma'$ corresponding to a face of $\Gamma'$ distinct from $F'$.
\end{enumerate}
\end{lemma}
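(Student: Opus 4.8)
The plan is to analyze how the zigzags of $\Gamma\#_g\Gamma'$ behave near the glued triangle $T=\partial F=\partial F'$ and to reduce both statements to the cycle structure of a single permutation of $\Omega(F)\cong\Omega(F')$, identified via $g$, in the spirit of Lemma \ref{l-2}. The first observation I would record is geometric: in $\Gamma\#_g\Gamma'$ the faces $F$ and $F'$ are destroyed, so each edge of $T$ now lies on exactly one face $G$ coming from $\Gamma$ and one face $G'$ coming from $\Gamma'$. Hence, whenever a zigzag traverses an edge of $T$, its two steps there use $G$ and $G'$, and the zigzag passes between the $\Gamma$-part and the $\Gamma'$-part. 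Away from $T$, the $\Gamma$-part (resp. $\Gamma'$-part) is identical to $\Gamma$ (resp. $\Gamma'$) with the interior of $F$ (resp. $F'$) removed, so any excursion not meeting $T$ is literally an excursion of $\Gamma$ or of $\Gamma'$.

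The central step is a transit correspondence. I would show that ``crossing into the $\Gamma'$-part at an oriented edge $q$'' has outgoing face $G'$, which in $\Gamma'$ alone is exactly the situation of a zigzag \emph{leaving} $F'$ through $q$, i.e. of $q=D_{F'}(q_0)$ being the exit of a pass-through. The ensuing $\Gamma'$-excursion therefore coincides with the excursion of $\Gamma'$ from that exit to the next entry $M_{F'}(q)$, at which the incoming face is again the $\Gamma'$-face, so the zigzag crosses back into the $\Gamma$-part; the same analysis in $\Gamma$, after relabelling the oriented edges of $T$ by $g$, shows the next $\Gamma$-excursion is governed by $M_F$. Composing, the successive ``into $\Gamma'$'' crossings of one zigzag, read in $\Omega(F')$, form an orbit of
$$Q:=gM_Fg^{-1}M_{F'},$$
so the zigzags meeting $T$ are in bijection with the orbits of $Q$. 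The step I expect to be the main obstacle is precisely the orientation bookkeeping here: matching ``exit/entry'' for $F'$ with ``entry/exit'' for $F$, checking a crossing fixes the same oriented edge in both triangulations up to the relabelling $g$ (with $g(-e)=-g(e)$, $gD_Fg^{-1}=D_{F'}$), and verifying that the incoming face—which is \emph{not} the one it would be in the stand-alone triangulation—does not affect the outgoing excursion.

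For part (1) I would use essentiality of $F$ and $F'$ to see that no zigzag can avoid $T$: a zigzag never touching $T$ would live entirely in one part and, away from $T$, would be a zigzag of $\Gamma$ (resp. $\Gamma'$) missing every edge of $F$ (resp. $F'$), contradicting essentiality. Thus all zigzags meet $T$ and correspond to $Q$-orbits. I would then mirror the proof of Lemma \ref{l-2}: if $Z$ has crossing-orbit $\mathcal{X}\subset\Omega(F')$ and $Z^{-1}$ has crossing-orbit $\mathcal{Y}$, then $\mathcal{X}\cap\mathcal{Y}=\emptyset$, since a common oriented edge would force $Z$ and $Z^{-1}$ to share a consecutive pair of edges (the crossing edge together with the edge it determines through $G'$) and hence to coincide, which is impossible because a zigzag cannot be reversed to itself. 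As $|\mathcal{X}|=|\mathcal{Y}|$ and both lie in the six-element set $\Omega(F')$, every orbit has length at most $3$; hence $Q$ has exactly two orbits if and only if both have length $3$, i.e. if and only if $Q$ is the composition of two distinct commuting $3$-cycles. Since ``exactly two orbits'' is exactly ``two zigzags up to nothing'' and $z$-knottedness means a single zigzag up to reverse, this would give (1).

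For part (2), suppose $\Gamma'$ is $z$-knotted and $Q$ is the composition of two distinct commuting $3$-cycles. Then $F'$ is essential and, by Lemma \ref{l-zk}, the single zigzag $W$ of $\Gamma'$ runs through every edge and every face of $\Gamma'$; broken at $F'$ it decomposes into arcs indexed by the exits in $\Omega(F')$. Because $Q$ has cycle type $(3,3)$, there are precisely two zigzags $Z,Z^{-1}$ of the connected sum meeting $T$, with crossing-orbits partitioning $\Omega(F')=\mathcal{X}\sqcup\mathcal{Y}$, so their $\Gamma'$-excursions realize all six arcs of $W$ and $W^{-1}$; as $Z$ and $Z^{-1}$ traverse the same edges, $Z$ alone then meets every face $S$ of $\Gamma'$ other than $F'$. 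Conversely, any zigzag touching an edge of such an $S$ must cross $T$, for its $\Gamma'$-excursion through that edge, being an excursion of $\Gamma'$, reaches an edge of $F'$ (since $F'$ is essential), i.e. reaches $T$. Thus the only zigzags meeting $S$ are $Z$ and $Z^{-1}$, and I would invoke Lemma \ref{l-0}(2) to conclude $\mathcal{Z}(S)=\{Z,Z^{-1}\}$ for every face $S$ coming from a face of $\Gamma'$ distinct from $F'$.
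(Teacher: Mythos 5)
Your proposal is correct and takes essentially the same route as the paper's proof: both decompose every zigzag of $\Gamma\#_{g}\Gamma'$ meeting the glued triangle into alternating excursions governed by $M_{F}$ and $M_{F'}$, identify such zigzags with the cycles of $gM_{F}g^{-1}M_{F'}$ (bounding cycle lengths by $3$ via the no-self-reversal argument), use essentiality to rule out zigzags avoiding the triangle in (1), and in (2) use $z$-knottedness of $\Gamma'$ so that the arcs of its unique zigzag pair force $Z$ and $Z^{-1}$ to pass through every edge of $\Gamma'$ twice. The only presentational differences are the direction of the correspondence (the paper builds the zigzag $Z(e)$ by concatenating arcs along an orbit, you extract the orbit from a given zigzag) and that you exclude further zigzags through faces of $\Gamma'$ via essentiality of $F'$ rather than via the edge-counting argument recycled from the proof of Lemma \ref{l-zk}; both are sound.
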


\begin{proof}
For every $e\in \Omega(F)$ there is a unique zigzag of $\Gamma$ containing $e$ and $M_{F}(e)$.
As in the proof of Lemma \ref{l-2},
we write $[e,M_{F}(e)]$ for the part of this zigzag between $e$ and $M_{F}(e)$. 
Let ${\mathcal X}$ be the set of all $[e,M_{F}(e)]$, where $e\in \Omega(F)$.
This set consists of $6$ elements, and
for every $X\in {\mathcal X}$ the reversed path $X^{-1}$ also belongs to ${\mathcal X}$
(indeed, for $X=[e,M_{F}(e)]$ we have $X^{-1}=[e',M_{F}(e')]$, where $e'=-M_{F}(e)$).

Similarly, we denote by ${\mathcal X}'$ the set of all $[e,M_{F'}(e)]$, where $e\in \Omega(F')$
(note that the elements of ${\mathcal X}'$ are contained in zigzags  of $\Gamma'$).

Let $e\in \Omega(F')$. Consider  the cyclic sequence
$$[e,M_{F'}(e)],[g^{-1}M_{F'}(e), M_{F}g^{-1}M_{F'}(e)],$$
$$[gM_{F}g^{-1}M_{F'}(e),M_{F'}(gM_{F}g^{-1}M_{F'})(e)],$$
$$\vdots$$
$$[g^{-1}M_{F'}(gM_{F}g^{-1}M_{F'})^{m-1}(e),M_{F}g^{-1}M_{F'}(gM_{F}g^{-1}M_{F'})^{m-1}(e)],$$
where $m$ is the smallest non-zero number satisfying 
$$(gM_{F}g^{-1}M_{F'})^{m}(e)=e.$$
In fact, this is a cyclic sequence
$$X'_{1},X_{1},\dots,X'_{m},X_{m},$$
where all $X_{i}$ belong to ${\mathcal X}$ and every $X'_{i}$ is an element of ${\mathcal X}'$.
As in the examples from the previous section, 
for any two consecutive parts $A,B\in {\mathcal X}\cup{\mathcal X}'$ in this sequence we identify the last edge from $A$ with the first edge from $B$
and get a zigzag in the connected sum $\Gamma \#_{g} \Gamma'$.
We denote this zigzag by $Z(e)$.
We have
$$X_{i}\ne X^{-1}_{j}\;\mbox{ and }\;X'_{i}\ne X'^{-1}_{j}$$  
for any pair $i,j\in \{1,\dots,m\}$ (otherwise, the zigzag $Z(e)$ is self-reversed which is impossible).
This implies that $m\le 3$.

The zigzag $Z(e)$ corresponds to an $m$-cycle in the permutation $gM_{F}g^{-1}M_{F'}$
and the reversed zigzag $Z(e)^{-1}$ is the cyclic sequence 
$$X^{-1}_{m},X'^{-1}_{m},\dots,X^{-1}_{1},X'^{-1}_{1}$$
related to a different $m$-cycle in this permutation.
Note that $Z(e)^{-1}$ coincides with $Z(e')$ for a certain $e'\in \Omega(F')$.
As in the proof of Lemma \ref{l-2}, we establish that the following two conditions are equivalent:
\begin{enumerate}
\item[(A)] the permutation $gM_{F}g^{-1}M_{F'}$ is the composition of two distinct commuting $3$-cycles,
\item[(B)] for any $e,e'\in \Omega(F')$ the zigzag $Z(e')$ coincides with $Z(e)$ or $Z(e)^{-1}$.
\end{enumerate}

Now, we prove the statements (1) and (2).

(1). If the faces $F$ and $F'$ both are essential, 
then every zigzag of $\Gamma$ or $\Gamma'$ contains an edge from $F$ or $F'$ (respectively).
This implies that every zigzag in the connected sum $\Gamma \#_{g} \Gamma'$
is of type $Z(e)$, where $e\in \Omega(F')$. 
Then (B) will be equivalent to the $z$-knottedness of the connected sum.

(2). Suppose that $\Gamma'$ is $z$-knotted and the condition (A) holds.
Then (B) also holds true and we assert that the zigzag $Z=Z(e)$, $e\in\Omega(F')$ is as required.

The triangulation $\Gamma'$ contains a single pair of zigzags $Z',Z'^{-1}$.
As in the proof of Lemma \ref{l-2}, we have
$$Z'=X'_{1},X'_{2},X'_{3}\;\mbox{ and }\; Z'^{-1}=X'^{-1}_{3},X'^{-1}_{2},X'^{-1}_{1},$$
where $X'_{1},X'_{2},X'_{3},X'^{-1}_{1},X'^{-1}_{2},X'^{-1}_{3}$ are mutually distinct elements of ${\mathcal X}'$,
in other words, the set ${\mathcal X}'$ is formed by these elements.
The condition (B) guarantees that every $X'\in{\mathcal X}'$ is contained in $Z$ or $Z^{-1}$.

By Lemma \ref{l-zk},
each of the zigzags $Z',Z'^{-1}$ passes through every edge of $\Gamma'$ twice.
This implies that each of the zigzags $Z,Z^{-1}$ also passes through every edge of $\Gamma'$ twice.
Using arguments from the proof of  Lemma \ref{l-zk}, 
we show that there is no other zigzag of $\Gamma\#_{g} \Gamma'$ containing edges from $\Gamma'$. 
Therefore, ${\mathcal Z}(S)=\{Z,Z^{-1}\}$ 
for every face $S\ne F'$ of $\Gamma'$ considered as a face of the connected sum $\Gamma\#_{g} \Gamma'$.
\end{proof}

\section{Proof of Theorem \ref{th-main}}
Let $\Gamma$ be a triangulation which is not locally $z$-knotted for a certain face $F$.

\begin{lemma}\label{l-4}
There is a $z$-knotted  triangulation $\Gamma'$ of the sphere ${\mathbb S}^{2}$ and a face $F'$ in this triangulation such that 
$gM_{F}g^{-1}M_{F'}$ is the composition of two distinct commuting 3-cycles for a certain special homeomorphism $g:\partial F\to\partial F'$.
\end{lemma}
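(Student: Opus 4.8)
The plan is to reduce the statement to a finite case analysis governed by Theorem \ref{th-1}. Since $\Gamma$ is not locally $z$-knotted for $F$, that theorem forces $M_{F}$ to be one of the three ``non-knotted'' types (M5), (M6), (M7), the remaining types (M1)--(M4) being precisely those that occur for locally $z$-knotted faces. So it suffices to handle these three possibilities, and for each one exhibit a concrete $z$-knotted sphere triangulation $\Gamma'$, a face $F'$, and a special homeomorphism $g$ for which the required product has the right cycle structure. The guiding observation is that a permutation of the six-element set $\Omega(F')$ is a composition of two distinct commuting $3$-cycles exactly when it is fixed-point-free with all cycles of length $3$, i.e. of cycle type $(3,3)$. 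Throughout I will use the two properties of the induced bijection $g\colon\Omega(F)\to\Omega(F')$ recorded at the start of Section 6, namely $g(-e)=-g(e)$ and $gD_{F}g^{-1}=D_{F'}$; since $g$ commutes with $e\mapsto -e$ and conjugates $D_{F}$ to $D_{F'}$, it carries a monodromy of type (Mi) relative to $D_{F}$ to a permutation of the same type relative to $D_{F'}$.

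For the types (M5) and (M6) the argument is immediate. Inspecting the formulas in Theorem \ref{th-1}, each of these permutations is already fixed-point-free of cycle type $(3,3)$: (M5) is $(D_{F})^{-1}$, and (M6) is a product of two disjoint $3$-cycles covering all of $\Omega(F)$. I therefore take $\Gamma'$ to be the $z$-knotted triangulation of $\mathbb{S}^{2}$ from Example \ref{m1}, whose face $F'$ has monodromy $M_{F'}=\mathrm{id}$ of type (M1). Then for \emph{any} special homeomorphism $g$ we have $gM_{F}g^{-1}M_{F'}=gM_{F}g^{-1}$, a conjugate of $M_{F}$, hence again of cycle type $(3,3)$ (in the (M5) case this conjugate is simply $(D_{F'})^{-1}$). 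This settles both cases with no alignment condition on $g$.

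The genuinely nontrivial case is (M7), and this is where I expect the only real work. Here $M_{F}=(e_{1},e_{2})(-e_{1},-e_{2})$ fixes $e_{3}$ and $-e_{3}$, so it is \emph{not} of cycle type $(3,3)$, and a trivial monodromy on the other side cannot help; I must use a nontrivial $M_{F'}$ to fold the two fixed edges into $3$-cycles. I take $\Gamma'$ to be the $z$-knotted bipyramid of Example \ref{m3}, whose face $F'$ has monodromy of type (M3), say $M_{F'}=(-e_{1},e_{2},e_{3})(-e_{3},-e_{2},e_{1})$ for a cycle $(e_{1},e_{2},e_{3})$ of $D_{F'}$. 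I choose $g$ so that the transposed pair of $M_{F}$ is carried exactly onto the pair $e_{1},e_{2}$ of this cycle, giving $gM_{F}g^{-1}=(e_{1},e_{2})(-e_{1},-e_{2})$ with $e_{3},-e_{3}$ fixed. A direct computation of the product (applying $M_{F'}$ first) then yields
$$gM_{F}g^{-1}M_{F'}=(e_{1},-e_{3},-e_{1})(e_{2},e_{3},-e_{2}),$$
which is of cycle type $(3,3)$, hence a composition of two distinct commuting $3$-cycles, as required.

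The one point that must be verified with care in this last case is that the prescribed alignment is actually realized by a bona fide special homeomorphism. This holds because a special homeomorphism is determined by a bijection of the three vertices, and such a bijection can send the $D_{F}$-cycle running through $e_{1},e_{2},e_{3}$ to the chosen $D_{F'}$-cycle in the prescribed order; the compatibility relations $g(-e)=-g(e)$ and $gD_{F}g^{-1}=D_{F'}$ then guarantee that $g$ permutes $\Omega(F)$ to $\Omega(F')$ in the way used above. With this, all three cases are covered and the lemma follows.
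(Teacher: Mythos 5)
Your proof is correct and takes essentially the same approach as the paper's: it splits into the cases (M5)/(M6) versus (M7), handles the first two by gluing the face of identity monodromy from Example \ref{m1} (so that $gM_{F}g^{-1}M_{F'}=gM_{F}g^{-1}$ is a conjugate of a cycle-type $(3,3)$ permutation), and handles (M7) by gluing a face of type (M3) from the bipyramid of Example \ref{m3} via the aligned special homeomorphism $g(e_{i})=e'_{i}$. Your explicit computation $gM_{F}g^{-1}M_{F'}=(e_{1},-e_{3},-e_{1})(e_{2},e_{3},-e_{2})$ is precisely the ``direct verification'' the paper leaves to the reader, and it checks out, as does your justification that the required alignment is realized by a genuine special homeomorphism.
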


\begin{proof}
By Theorem \ref{th-1}, the $z$-monodromy $M_{F}$ is one of (M5)--(M7).

Suppose that $M_{F}$ is (M5) or (M6). Then it is the composition of two distinct commuting $3$-cycles.
Let us  take a $z$-knotted triangulation $\Gamma'$ of ${\mathbb S}^{2}$ containing a face $F'$ such that $M_{F'}$ is identity,
for example, the triangulation from Example \ref{m1}.
Then 
$$gM_{F}g^{-1}M_{F'}=gM_{F}g^{-1}$$ 
is the composition of two distinct commuting 3-cycles for every special homeomorphism $g:\partial F\to\partial F'$.

Consider the case when $M_{F}$ is (M7), i.e. 
$$M_{F}=(e_{1},e_{2})(-e_{1},-e_{2}),$$ 
where $(e_{1},e_{2},e_{3})$ is one of the cycles in $D_{F}$ and $e_{3},-e_{3}$ are fixed points.
Let $\Gamma'$ be the bipyramid $BP_{2k+1}$, where $k$ is odd. 
For every face $F'$ of $\Gamma'$ we have
$$M_{F'}=(-e'_{1},e'_{2},e'_{3})(-e'_{3},-e'_{2},e'_{1}),$$ 
where $(e'_{1},e'_{2},e'_{3})$ is one of the cycles in the permutation  $D_{F'}$, see Example \ref{m3}.
There is a special homeomorphism $g:\partial F\to\partial F'$  satisfying $g(e_{i})=e'_{i}$ for every $i$.
A direct verification shows that $gM_{F}g^{-1}M_{F'}$ is the composition of two distinct commuting 3-cycles.
\end{proof}

Let $\Gamma'$, $F'$ and $g:\partial F\to\partial F'$ be as in Lemma \ref{l-4}.
By the statement (2) from Lemma \ref{l-3}, 
the connected sum $\Gamma \#_{g} \Gamma'$ contains a zigzag $Z$ 
such that ${\mathcal Z}(S)=\{Z,Z^{-1}\}$
for every face $S$ in $\Gamma \#_{g} \Gamma'$ corresponding to a face of $\Gamma'$ distinct from $F'$.
Therefore, $\Gamma \#_{g} \Gamma'$ is locally $z$-knotted for all faces of $\Gamma'$ distinct from $F'$.

\begin{lemma}\label{l-5}
Suppose that $S$ is a face of $\Gamma$ distinct from $F$.
If $\Gamma$ is locally $z$-knotted for $S$, then $\Gamma \#_{g} \Gamma'$ also is locally $z$-knotted for $S$.
\end{lemma}

\begin{proof}
Let $Z_{S}, (Z_{S})^{-1}$ be a unique pair of zigzags in $\Gamma$ containing edges of $S$.
By Lemma \ref{l-zk-loc}, each of these zigzags passes through every edge of $S$ twice.

If  $Z_{S}$ and $(Z_{S})^{-1}$ do not contain any edge from $F$, then they are zigzags in $\Gamma \#_{g} \Gamma'$,
and  the connected sum is locally $z$-knotted for $S$ by Lemma \ref{l-zk-loc}.

If  $Z_{S}$ and $(Z_{S})^{-1}$ contain some edges of $F$, then they belong to ${\mathcal Z}(F)$.
In the proof of Lemma \ref{l-3}, we consider the set ${\mathcal X}$ formed by 
all parts of the zigzags from ${\mathcal Z}(F)$ between $e\in \Omega(F)$ and $M_{F}(e)$.
Recall that every $X\in {\mathcal X}$ is contained in $Z$ or $Z^{-1}$.
Since  $Z_{S}$ and $(Z_{S})^{-1}$ belong to ${\mathcal Z}(F)$, 
they are cyclic sequences formed by at most three  elements of ${\mathcal X}$.
Each of $Z_{S}, (Z_{S})^{-1}$ passes through every edge of $S$ twice and  the same holds for $Z$ and $Z^{-1}$.
Lemma \ref{l-zk-loc} implies that $\Gamma \#_{g} \Gamma'$ is locally $z$-knotted for $S$.
\end{proof}

So, the connected sum $\Gamma \#_{g} \Gamma'$ has the following properties:
\begin{enumerate}
\item[(1)] it is locally $z$-knotted for all faces of $\Gamma'$ distinct from $F'$,
\item[(2)] if $\Gamma$ is locally $z$-knotted for a face $S\ne F$, 
then $\Gamma \#_{g} \Gamma'$ is locally $z$-knotted for $S$.
\end{enumerate}
If $\Gamma \#_{g} \Gamma'$ is not locally $z$-knotted for a face $T$, 
then $T$ is a face of $\Gamma$ by (1).
We apply the above arguments to the face $T$ in $\Gamma \#_{g} \Gamma'$.
Using (2), we construct recursively a shredding of $\Gamma$ which is locally $z$-knotted for every face.
By Theorem \ref{th-2}, this shredding is $z$-knotted.

Suppose that $\Gamma$ contains precisely $2m$ zigzags, i.e. $m$ zigzags up to reversing.
If $\Gamma$ is not locally $z$-knotted for a face $F$, then ${\mathcal Z}(F)$ contains $2$ or $3$ zigzags up to reversing
(Remark \ref{rem2}).
Using the corresponding connected sum, we replace these zigzags by one zigzag
and come to a triangulation with $m-1$ or $m-2$ zigzags up to reversing. 
So, we need at most $m-1$ times to get a $z$-knotted shredding of $\Gamma$.

\section{Connected sums of z-knotted triangulations}

By Theorem \ref{th-1}, there are precisely four types of faces in $z$-knotted triangulations.
The corresponding $z$-monodromies are (M1)--(M4).
In \cite{PT}, these four types were described without $z$-monodromy.
The main result of \cite{PT} determines all cases when the connected sum of two $z$-knotted triangulations is $z$-knotted.
The proof given in \cite{PT} was a long case-by-case verification. 
Now, this result will be presented as a simple consequence of  the statement (1) from Lemma \ref{l-3}.

\begin{theorem}\label{th-3}
Let $\Gamma$ and $\Gamma'$ be $z$-knotted triangulations.
Then the following assertions are fulfilled: 
\begin{enumerate}
\item[{\rm (1)}] 
If $F$ is a face in $\Gamma$ such that $M_{F}=D_{F}$ {\rm(}type {\rm (M2)}{\rm)}, 
then for every face $F'$ in $\Gamma'$ and every special homeomorphism $g: \partial F \to \partial F'$ 
the connected sum $\Gamma \#_{g} \Gamma'$ is $z$-knotted.
\item[{\rm (2)}]
Suppose that $F$ is a face in $\Gamma$ and $M_{F}$ is identity {\rm(}type {\rm (M1)}{\rm)}.
If $F'$ is a face in $\Gamma'$ such that
the connected sum $\Gamma \#_{g} \Gamma'$ is $z$-knotted for a certain special homeomorphism $g: \partial F \to \partial F'$,
then $M_{F'}$ is $D_{F'}$ or {\rm (M3)}. 
In these cases, the connected sum $\Gamma \#_{g} \Gamma'$ is $z$-knotted for every special homeomorphism $g: \partial F \to \partial F'$.
\item[{\rm(3)}] 
If $F$ is a face in $\Gamma$, $F'$ is a face in $\Gamma'$ and $M_{F},M_{F'}$ are {\rm (M3)} or {\rm (M4)},
then there is a special homeomorphism $g: \partial F \to \partial F'$ such that 
the connected sum $\Gamma \#_{g} \Gamma'$ is $z$-knotted.
\end{enumerate}
\end{theorem}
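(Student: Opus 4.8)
The plan is to derive Theorem \ref{th-3} as a direct consequence of the statement (1) from Lemma \ref{l-3}. Since $\Gamma$ and $\Gamma'$ are $z$-knotted, every face in each triangulation is essential, so Lemma \ref{l-3}(1) applies to \emph{any} choice of faces $F,F'$ and special homeomorphism $g$: the connected sum $\Gamma\#_{g}\Gamma'$ is $z$-knotted if and only if $gM_{F}g^{-1}M_{F'}$ is the composition of two distinct commuting $3$-cycles. The entire theorem therefore reduces to a purely algebraic question about the permutation $gM_{F}g^{-1}M_{F'}$ of the six-element set $\Omega(F')$, where $M_{F}$ and $M_{F'}$ are each of type (M1)--(M4). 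First I would record the conjugation identities $g(-e)=-g(e)$ and $gD_{F}g^{-1}=D_{F'}$, which let me transport $M_{F}$ into a permutation of $\Omega(F')$ expressed entirely in terms of $D_{F'}$ and the fixed edge-triple. The key observation is that a special homeomorphism $g$ is determined by a choice of how the ordered vertices of $F$ map to those of $F'$, so there are exactly six such $g$, and they act on the cycle structure of $D_{F'}$ in a controlled way.

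For part (1), I would compute directly: when $M_{F}=D_{F}$, conjugation gives $gM_{F}g^{-1}=gD_{F}g^{-1}=D_{F'}$, independent of the choice of $g$. Then $gM_{F}g^{-1}M_{F'}=D_{F'}M_{F'}$, and by Lemma \ref{l-2} (applied to $\Gamma'$, which is $z$-knotted and hence locally $z$-knotted for $F'$) this is exactly the composition of two distinct commuting $3$-cycles. Thus Lemma \ref{l-3}(1) yields $z$-knottedness for every $g$, as claimed. This is the cleanest case because the (M2) condition is conjugation-invariant.

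For parts (2) and (3), the argument is a finite case analysis. When $M_{F}$ is identity, $gM_{F}g^{-1}M_{F'}=M_{F'}$, so the connected sum is $z$-knotted precisely when $M_{F'}$ itself is the composition of two distinct commuting $3$-cycles; among (M1)--(M4) this happens exactly for (M2) and (M3), which identifies the admissible $M_{F'}$ and shows the outcome is independent of $g$. For part (3), with both monodromies of type (M3) or (M4), I would exhibit, for each of the resulting pairings, at least one special homeomorphism $g$ making the product $gM_{F}g^{-1}M_{F'}$ a composition of two distinct commuting $3$-cycles; the six available choices of $g$ give enough freedom to align the fixed points and transposition/cycle data appropriately. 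The description of (M3) and (M4) in Theorem \ref{th-1} (in terms of a distinguished cycle $(e_1,e_2,e_3)$ of $D_F$ with prescribed signs and fixed points) is precisely the data needed to carry out the bookkeeping.

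The main obstacle I anticipate is the part (3) case analysis: verifying that for \emph{some} $g$ the product is two distinct commuting $3$-cycles requires checking the interaction of the sign patterns and fixed-point structures of two type-(M3)/(M4) permutations under all six conjugations. The trick will be to pick $g$ so that it matches the distinguished edge-triple of $M_F$ to that of $M_{F'}$ (as in the proof of Lemma \ref{l-4}), thereby reducing each subcase to a single explicit computation in the symmetric group on $\Omega(F')$. Once the correct $g$ is selected, the verification that $gM_{F}g^{-1}M_{F'}$ has the right cycle type is routine, and the negative direction in part (2)---that $M_{F'}$ of type (M1) or (M4) cannot work---follows because neither the identity nor a product of two transpositions with two fixed points is a composition of two $3$-cycles.
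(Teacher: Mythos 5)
Your overall reduction is exactly the paper's: since $\Gamma$ and $\Gamma'$ are $z$-knotted, all their faces are essential, so Lemma \ref{l-3}(1) converts the theorem into the algebraic question of when $gM_{F}g^{-1}M_{F'}$ is a composition of two distinct commuting $3$-cycles. Your parts (1) and (2) coincide with the paper's proof: for (1), $gM_{F}g^{-1}=gD_{F}g^{-1}=D_{F'}$ for every special $g$, and $D_{F'}M_{F'}$ has the required cycle type by Lemma \ref{l-2} applied to the $z$-knotted $\Gamma'$; for (2), $gM_{F}g^{-1}M_{F'}=M_{F'}$, and among (M1)--(M4) only (M2) and (M3) are themselves compositions of two distinct commuting $3$-cycles.

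The one place your plan would fail as written is part (3). You propose to pick $g$ ``matching the distinguished edge-triple of $M_{F}$ to that of $M_{F'}$ (as in the proof of Lemma \ref{l-4})'', i.e.\ $g(e_{i})=e'_{i}$. That choice works only in the (M3)--(M3) pairing, where $gM_{F}g^{-1}=M_{F'}$ and the product is $(M_{F'})^{2}=(M_{F'})^{-1}$, again two distinct commuting $3$-cycles. In the (M4)--(M4) pairing the identity matching gives $gM_{F}g^{-1}=M_{F'}=(e'_{1},-e'_{2})(e'_{2},-e'_{1})$, an involution, so $gM_{F}g^{-1}M_{F'}$ is the identity --- not two $3$-cycles. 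In the (M4)--(M3) pairing the identity matching yields a permutation containing the transposition $(e'_{1},-e'_{3})$: indeed $M_{F'}(e'_{1})=-e'_{3}$, which is fixed by $gM_{F}g^{-1}$, while $M_{F'}(-e'_{3})=-e'_{2}\mapsto e'_{1}$; so again the cycle type is wrong. The paper's proof uses cyclically shifted matchings in precisely these two subcases, $e_{1},e_{2},e_{3}\mapsto e'_{3},e'_{1},e'_{2}$ for (M4)--(M4) and $e_{1},e_{2},e_{3}\mapsto e'_{2},e'_{3},e'_{1}$ for (M4)--(M3), and verifies directly that the resulting products, $(-e'_{1},e'_{2},e'_{3})(e'_{1},-e'_{2},-e'_{3})$ and $(e'_{1},e'_{2},-e'_{2})(-e'_{1},-e'_{3},e'_{3})$ respectively, are compositions of two distinct commuting $3$-cycles. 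Your fallback remark that the six available homeomorphisms give enough freedom is true and rescues the argument in principle (a finite search does succeed), but the specific trick you name is not the one that works, so the ``routine'' computations you defer would come out negative until the shifted choices are found.
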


\begin{proof}
Recall that every special homeomorphism $g: \partial F \to \partial F'$  induces a bijection of $\Omega(F)$ to $\Omega(F')$. 
This bijection is also denoted by $g$.

(1). It is easy to see that $gD_{F}g^{-1}=D_{F'}$. 
Therefore, $M_{F}=D_{F}$ implies that 
$$gM_{F}g^{-1}M_{F'}=D_{F'}M_{F'}$$
for every special homeomorphism $g: \partial F \to \partial F'$.
Since $\Gamma'$ is $z$-knotted, $D_{F'}M_{F'}$ is the composition of two distinct commuting $3$-cycles (Lemma \ref{l-2}).
The statement (1) from Lemma \ref{l-3} implies that the connected sum $\Gamma\#_{g}\Gamma'$ is $z$-knotted.

(2). If $M_{F}$ is identity, then 
$$gM_{F}g^{-1}M_{F'}=M_{F'}$$
for every special homeomorphism $g: \partial F \to \partial F'$. 
We get the composition of two distinct commuting $3$-cycles only in the case when $M_{F'}$ is $D_{F'}$ or (M3).

(3).
Suppose that $M_{F}$ and $M_{F'}$ both are (M3), i.e. 
$$M_{F}=(-e_{1},e_{2},e_{3})(-e_{3},-e_{2},e_{1})\;\mbox{ and }\; M_{F'}=(-e'_{1},e'_{2},e'_{3})(-e'_{3},-e'_{2},e'_{1}),$$
where $(e_{1},e_{2},e_{3})$ and $(e'_{1},e'_{2},e'_{3})$ are $3$-cycles in $D_{F}$ and $D_{F'}$, respectively.
There exists a special homeomorphism $g: \partial F \to \partial F'$ which transfers 
$e_{1},e_{2},e_{3}$ to $e'_{1},e'_{2},e'_{3}$ (respectively).
Then
$gM_{F}g^{-1}=M_{F'}$
and
$$gM_{F}g^{-1}M_{F'}=(M_{F'})^2=(M_{F'})^{-1}$$
is the composition of two distinct commuting $3$-cycles.

Consider the case when $F$ and $F'$ both are (M4), i.e.
$$M_{F}=(e_{1},-e_{2})(e_{2},-e_{1})\;\mbox{ and }\; M_{F'}=(e'_{1},-e'_{2})(e'_{2},-e'_{1})$$
where $(e_{1},e_{2},e_{3})$ and $(e'_{1},e'_{2},e'_{3})$ are $3$-cycles in $D_{F}$ and $D_{F'}$, respectively.
We take a special homeomorphism $g: \partial F \to \partial F'$ transferring 
$e_{1},e_{2},e_{3}$ to $e'_{3},e'_{1},e'_{2}$ (respectively).
Then $$gM_{F}g^{-1}=(e'_{1},-e'_{3})(e'_{3},-e'_{1})$$
and 
$$gM_{F}g^{-1}M_{F'}=(-e'_{1},e'_{2},e'_{3})(e'_{1},-e'_{2},-e'_{3})$$
is the composition of two distinct commuting $3$-cycles.

Suppose that $F$ is (M4) and $F'$ is (M3), i.e.
$$M_{F}=(e_{1},-e_{2})(e_{2},-e_{1})\;\mbox{ and }\; M_{F'}=(-e'_{1},e'_{2},e'_{3})(-e'_{3},-e'_{2},e'_{1}),$$
where $(e_{1},e_{2},e_{3})$ and $(e'_{1},e'_{2},e'_{3})$ are $3$-cycles in $D_{F}$ and $D_{F'}$, respectively.
There is a special homeomorphism $g: \partial F \to \partial F'$ sending 
$e_{1},e_{2},e_{3}$ to $e'_{2},e'_{3},e'_{1}$ (respectively).
Then
$$gM_{F}g^{-1}=(e'_{2}, -e'_{3})(e'_{3}, -e'_{2})$$
and  
$$gM_{F}g^{-1}M_{F'}=(e'_{1},e'_{2},-e'_{2})(-e'_{1},-e'_{3},e'_{3})$$
is the composition of two distinct commuting $3$-cycles.

In each of these three cases, the connected sum $\Gamma\#_{g}\Gamma'$ is $z$-knotted by the statement (1) from Lemma \ref{l-3}.
\end{proof}

\end{document}